\numberwithin{equation}{section}
\theoremstyle{plain}
\newtheorem{theorem}{Theorem}
\newtheorem*{theorem*}{Theorem}
\newtheorem{corollary}[theorem]{Corollary}
\newtheorem{lemma}[theorem]{Lemma}
\newtheorem{remark}[theorem]{Remark}
\begin{document}

\title[Speed of Biased Walk on a Galton-Watson Tree without Leaves]{The Speed of a Biased Walk on a Galton-Watson Tree without Leaves is Monotonic
with Respect to Progeny Distributions for High Values of Bias}

\author{BEHZAD MEHRDAD}
\address
{Courant Institute of Mathematical Sciences\newline
\indent New York University\newline
\indent 251 Mercer Street\newline
\indent New York, NY-10012\newline
\indent United States of America}
\email{mehrdad@cims.nyu.edu
\\
sen@cims.nyu.edu
\\
ling@cims.nyu.edu}

\author{SANCHAYAN SEN}

\author{LINGJIONG ZHU}

\subjclass[2000]{60K37; 60J80; 60G50} 
\keywords{Random Walk in random environment, Galton-Watson tree, speed, stochastic domination.}

\begin{abstract}
Consider biased random walks on two Galton-Watson trees without leaves having
progeny distributions $P_1$ and $P_2$ (GW$(P_1)$ and GW$(P_2)$) where
$P_1$ and $P_2$ are supported on positive integers and $P_1$ dominates $P_2$ stochastically.
We prove that the speed of the walk on GW$(P_1)$ is bigger than the same on GW$(P_2)$ when
the bias is larger than a threshold depending on $P_1$ and $P_2$.
This partially answers a question raised by Ben Arous, Fribergh and
Sidoravicius.
\end{abstract}

\maketitle


\section{Introduction and Main Results}

\subsection{Introduction}

Consider a supercritical Galton-Watson tree, i.e. a random rooted tree, where the offspring size of
all individuals are i.i.d. copies of an integer random variable $Z$, which satisfies
$P(Z=k)=p_{k}$, $k=0,1,\ldots$. The tree has no leaves if $p_{0}=0$.
We will
use $|x|$ to denote the distance of a vertex $x$ from the root. Moreover $x_{\ast}$ will
denote the ancestor of $x$ for any vertex $x$ different from the root and $x_{i}$ will denote
the $i$th child of $x$. Given a random tree $T$ and $\beta>0$, we define $\beta$-biased random walk $(X_{n})_{n\geq 0}$
on $T$ as follows. Transitions to each of the children of
the root are equally likely. If the vertex $x$ has $k$ children and $x$ is not the root then the transition probabilities are given by
\begin{align*}
&P(X_{n+1}=x_{\ast}|X_{n}=x)=\frac{1}{1+\beta k},
\\
&P(X_{n+1}=x_{i}|X_{n}=x)=\frac{\beta}{1+\beta k},\quad i=1,2,\ldots,k.
\end{align*}
We start the walk from the root of the tree and denote by $P^{\omega}$ the law of
$(X_{n})_{n\geq 0}$ on a tree $\omega$.
We define the averaged law as the semi-direct product $\mathbb{P}=\overline P\times P^{\omega}$ where $\overline P$ is the Galton-Watson
measure (associated with offspring distribution $P$) on the space of rooted trees conditioned on non-extinction.

\cite{Lyons} proved that if $\beta>\frac{1}{E[Z]}$, then the random walk is transient, i.e.
$\lim_{n\rightarrow\infty}|X_{n}|=\infty$. \cite{LyonsII} showed that $\mathbb{P}$-almost surely the speed
\begin{equation}\label{defn of speed}
v(\beta,P):=\lim_{n\rightarrow\infty}\frac{|X_{n}|}{n}
\end{equation}
exists and is a non-random constant. A lot of work has been done
on the behavior of the speed as a function of $\beta$. \cite{LyonsII}
conjectured that $v(\beta,P)$ increases in $\beta$ on $(\frac{1}{E[Z]},\infty)$ when the tree has no leaves i.e. $P\{0\}=0$.
The conjecture has been open for a long time until proven recently in \cite{BenArous} for large values of $\beta$.


\begin{theorem*}[\cite{BenArous}]
The speed $v(\beta,P)$ of a $\beta$-biased random walk on a Galton-Watson tree without leaves is increasing for
$\beta>\beta_{c}$ for some $\beta_{c}>0$ very large when $P\{0\}=0$.
\end{theorem*}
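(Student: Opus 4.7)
My plan is to exploit the renewal structure of the biased walk in order to express the speed as a ratio and then show that this ratio is monotone in $\beta$ for $\beta$ sufficiently large. Since for $\beta > 1/E[Z]$ the walk is transient, one can introduce regeneration (cut) times $0 < \tau_1 < \tau_2 < \ldots$: at time $\tau_k$ the walker enters a subtree that it will never leave. Using the i.i.d.\ structure of the Galton-Watson tree together with the strong Markov property, one shows that the increments $(|X_{\tau_{k+1}}|-|X_{\tau_k}|,\tau_{k+1}-\tau_k)$ are i.i.d.\ for $k\geq1$ with exponential tails (leveraging $P\{0\}=0$ so that no branch is finite), and therefore by the renewal theorem
\begin{equation*}
v(\beta,P)=\frac{E\bigl[|X_{\tau_2}|-|X_{\tau_1}|\bigr]}{E[\tau_2-\tau_1]}.
\end{equation*}

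For large $\beta$, at any non-root vertex with $k$ children the probability of stepping back toward the root is $\frac{1}{1+\beta k}=O(1/\beta)$, so backtracks are rare and both numerator and denominator above admit tractable expansions in powers of $1/\beta$, with leading behavior $v(\beta,P)\to 1$. To upgrade these expansions to monotonicity I would couple the $\beta$-biased and $(\beta+\varepsilon)$-biased walks on the same realization of the tree, driven by a common sequence of uniforms, so that whenever the $\beta$-walk steps forward so does the $(\beta+\varepsilon)$-walk. Discrepancies occur exactly at those vertices where the $\beta$-walk backtracks while the $(\beta+\varepsilon)$-walk advances, and under the rare-backtrack control one hopes that after a discrepancy the two walks recouple within a time of order a single regeneration epoch, producing a sign-definite contribution to the difference $v(\beta+\varepsilon,P)-v(\beta,P)$.

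The hard part is making this coupling genuinely useful once the two walks separate, because they then explore \emph{different} random subtrees of the Galton-Watson tree, so one can no longer argue pathwise on a fixed environment. One must quantify the forward-progress gain of the $(\beta+\varepsilon)$-walk against any loss coming from the $\beta$-walk happening to find a quicker escape route through the subtree it enters instead. The assumption $\beta\gg 1$ is essential for closing this step, since it provides uniform $O(1/\beta)$ control on backtrack probabilities and allows rare separation events to be absorbed by first-moment estimates and union bounds; the no-leaves hypothesis $P\{0\}=0$ enters to ensure the exponential tails of regeneration increments needed for the perturbative expansion to be legitimately differentiated term by term.
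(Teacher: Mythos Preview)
This theorem is not proved in the paper; it is quoted as a result of Ben Arous, Fribergh and Sidoravicius \cite{BenArous} and serves only as background for the paper's own contribution (monotonicity of the speed in the progeny distribution $P$, not in $\beta$). So there is no proof in the paper to compare your sketch against directly.

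That said, the paper's proof of its own Theorem~\ref{MainThm} imports the machinery of \cite{BenArous}, and against that your outline has the right skeleton---regeneration structure, renewal formula for the speed, coupling via common uniforms, rare-backtrack control for large $\beta$---but it is missing the central technical device, and without it the ``hard part'' you identify remains genuinely hard. The trick is not to use regeneration times of the tree-walks themselves, but to couple \emph{both} tree-walks to a single $\beta$-biased walk $Y_n$ on $\mathbb{Z}_{\geq 0}$ driven by the same uniforms, and to use the regeneration times of $Y$ (``super-regeneration times''). Any regeneration time for $Y$ is automatically a regeneration time for each tree-walk, so the denominator $\tilde E[\tau_1]$ is literally the same for both speeds and the inequality of speeds reduces to a single inequality of numerators, $\tilde E[|X_{\tau_1}^{(1)}|]>\tilde E[|X_{\tau_1}^{(2)}|]$. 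This sidesteps your recoupling problem entirely: one never tries to bring the two walks back together. Instead one decomposes according to $|\mathcal{B}|$, the number of backsteps of $Y$ before $\tau_1$; the event $\{|\mathcal{B}|=1\}$ contributes a positive term of order $1/\beta$ to the difference of numerators, while the estimate $P(|\mathcal{B}|=k)\leq c\bigl(27/(4(1+\beta))\bigr)^k$ together with the deterministic bound $\tau_1\leq 3|\mathcal{B}|+2$ shows that the total contribution from $\{|\mathcal{B}|\geq 2\}$ is $O(1/\beta^2)$. Your proposal to ``differentiate the perturbative expansion term by term'' is a plausible alternative route, but as written it is only heuristic and does not constitute a proof.
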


Very recently, A\"{i}d\'{e}kon obtained an expression for the speed $v$.


\begin{theorem*}[\cite{Aidekon}]
\begin{equation}
v(\beta,P)=\frac{\mathbb{E}\left[\frac{(\beta Z-1)Y_{0}}{1-\beta+\beta\sum_{i=0}^{Z}Y_{i}}\right]}
{\mathbb{E}\left[\frac{(\beta Z+1)Y_{0}}{1-\beta+\beta\sum_{i=0}^{Z}Y_{i}}\right]},
\end{equation}
where $Y_{i}$ are i.i.d. copies distributed as $P_{x}(\tau_{x_{\ast}}=\infty)$, where $\tau_{y}$ is
the first hitting time of $y$.
\end{theorem*}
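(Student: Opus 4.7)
My plan is to derive Aïdékon's formula via the ergodic theorem applied to the \emph{environment viewed from the particle} Markov chain; the technical heart will be an explicit Radon--Nikodym derivative of its invariant measure in terms of the escape probabilities.

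The first step is to introduce the chain $\omega_n$ obtained by re-rooting the tree $T$ at $X_n$ together with a flag marking the direction of the former parent. Under $\mathbb{P}$ this is Markov on rooted, flagged trees, and its one-step mean displacement at a state whose root has $Z=Z(\omega)$ children is
$$h(\omega) \;=\; \mathbb{E}^{\omega}\!\left[\,|X_1|-|X_0|\,\right] \;=\; \frac{\beta Z-1}{\beta Z+1}.$$
Transience of the walk (guaranteed by $\beta E[Z]>1$) combined with the no-leaves assumption produces an ergodic invariant measure $\pi$ for $(\omega_n)$, and the ergodic theorem then yields $v(\beta,P) = \mathbb{E}_{\pi}[h(\omega)]$.

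The central task is to identify $d\pi/dP$, where $P$ is the underlying Galton--Watson measure on rooted trees. Using a cycle decomposition of the walk between consecutive upward excursions from the current vertex, one shows that $d\pi/dP$ is proportional to
$$\rho(\omega) \;=\; \frac{Y_0\,(\beta Z+1)}{1-\beta+\beta\sum_{i=0}^{Z}Y_i},$$
where $Y_0$ is the escape probability of the current vertex and $Y_1,\dots,Y_Z$ are those of its children. Heuristically, $Y_0$ weights by the probability that the vertex is actually reached along the backbone, $\beta Z+1$ is the local degree normalization from the transition kernel, and $1-\beta+\beta\sum_{i=0}^{Z}Y_i$ captures the expected local time the walker spends near the vertex before leaving for good. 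Inserting $\rho$ into $\mathbb{E}_{\pi}[h]=\mathbb{E}[h\rho]/\mathbb{E}[\rho]$ and cancelling the common factor $\beta Z+1$ in the numerator of $h$ against the denominator of $\rho$ produces the stated formula.

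The main obstacle is the justification of the density $\rho$: verifying its invariance requires either a detailed-balance-style computation against the transition kernel of $(\omega_n)$ or an excursion/regeneration analysis that evaluates the expected number of visits to a given vertex before the walker leaves its neighborhood, tying these quantities to $Y_0,Y_1,\dots,Y_Z$. The particular combination $1-\beta+\beta\sum_{i=0}^Z Y_i$ (as opposed to a more naive $\sum_{i=1}^Z Y_i$) is subtle and reflects a careful bookkeeping of excursions from $x$ that traverse through $x_{\ast}$ and back. Once $\rho$ is established, the remaining manipulations are purely algebraic.
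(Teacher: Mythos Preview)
The paper does not prove this theorem. It is quoted in the introduction as a result of A\"{i}d\'{e}kon \cite{Aidekon}, included purely for context; the paper's own contributions (Theorem~\ref{MainThm}, Theorem~\ref{KThm}, Corollaries~\ref{lowcutoff} and~\ref{morekids}) are established by a coupling/super-regeneration argument that never appeals to A\"{i}d\'{e}kon's formula. There is therefore no proof in this paper against which to compare your proposal.

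As a separate comment on the proposal itself: your outline via the environment-seen-from-the-particle chain and the ergodic theorem is indeed the strategy A\"{i}d\'{e}kon uses in \cite{Aidekon}, but what you have written is a plan rather than a proof. You correctly identify the crux---the form of the density $\rho$---and then explicitly label it ``the main obstacle'' while offering only a heuristic (``$Y_0$ weights by the probability that the vertex is actually reached along the backbone'', etc.). That identification \emph{is} the theorem: verifying that $\rho$ is invariant for the re-rooted-tree chain, and that the specific combination $1-\beta+\beta\sum_{i=0}^{Z}Y_i$ (rather than, say, $\sum_{i=1}^{Z}Y_i$) is the right normalizer, requires either a careful stationarity check against the transition kernel or an excursion-time computation, neither of which you carry out. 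Until that step is done, nothing has been proved.
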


Using his own formula, A\"{i}d\'{e}kon (private communications) can prove the monotonicity for $\beta\geq2$
when $P\{0\}=0$.
However, the original conjecture is still open in the sense that it is not known if the monotonicity
holds for every $\beta>1/E[Z]$.

In this paper we shall investigate how the speed changes when one changes the progeny distribution
keeping the bias fixed.

The paper is organized as the following. In Section \ref{MainResults}, we will
introduce our main results. In Section \ref{CouplingSection}, we will describe in details
our coupling method. Finally, in Section \ref{ProofSection}, we will provide the proofs of all the
results in \ref{MainResults}.

\subsection{Main Results}\label{MainResults}

In \cite{BenArous}, the authors raised the following interesting question, if $P_{1}$ stochastically
dominates $P_{2}$, does it imply that $v(\beta,P_{1})\geq v(\beta,P_{2})$? We show that this is indeed the case at least
when the bias is large.

Throughout this paper, when we say $P_{1}$ stochastically dominates $P_{2}$, we also mean that $P_{1}\neq P_{2}$.
We also recall that if $P_1$ dominates $P_2$ stochastically then there is a coupling
of the random variables $Z_1$ and $Z_2$ having distributions $P_1$ and $P_2$ respectively
such that $Z_2\leq Z_1$.

We have the following result.


\begin{theorem}\label{MainThm}
Assume that $P_{1}$ and $P_{2}$ are two
probability measures on positive integers such that $P_{1}$ stochastically dominates $P_{2}$.
Consider $\beta$-biased random walks on $GW(P_{1})$ and $GW(P_{2})$. Then for every $\delta>0$ there
exists a $\beta_{0}:=\beta_{0}(P_{1},P_{2},\delta)>0$ such that
for any $\beta>\beta_{0}$,
we have $v(\beta,P_{1})>v(\beta,P_{2})$. The constant $\beta_{0}$ equals $\max\{\beta_1,\frac{23}{4}+\delta\}$ where
\begin{equation}\label{twolowerbounds}
\beta_{1}:= c_{\delta}\cdot\min\left\{\frac{E\left[\left(\frac{1}{Z_{1}}-\frac{1}{Z_{2}}\right)1_{Z_{1}<Z_{2}}\right]}
{E\left[\frac{1}{Z_{2}'}-\frac{1}{Z_{1}'}\right]},
\frac{E\left[Z_{2}'\left(\frac{1}{Z_{2}'}-\frac{1}{Z_{1}'}\right)\right]}
{E\left[\frac{1}{Z_{2}'}-\frac{1}{Z_{1}'}\right]}+1\right\},
\end{equation}
and $c_{\delta}$ is a universal constant depending only on $\delta$. Here, $Z_{1}$, $Z_{2}$ are independent and are distributed
according to $P_{1}$ and $P_{2}$ respectively, $Z_{1}'$ and $Z_{2}'$ are jointly distributed so that $Z_{1}'\geq Z_{2}'$ almost
surely and their marginal distributions are $P_{1}$ and $P_{2}$.
\end{theorem}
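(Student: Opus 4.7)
The plan is to leverage A\"{i}d\'{e}kon's closed-form expression for the speed and to study it asymptotically in the high-bias regime, using the coupling of the two progeny distributions to sign the leading term of the difference. Heuristically, when $\beta$ is large the walk is ballistic and backtracking is rare: each vertex of out-degree $k$ is exited toward the parent with probability close to $1/(\beta k)$, and the resulting loss of speed at such a vertex is of order $2/(\beta k)$. Averaging gives $v(\beta,P) \approx 1 - (2/\beta)\, E[1/Z]$ at leading order, a quantity that is strictly decreasing in $P$ under the stochastic order. The theorem then amounts to making this rigorous with quantitative control on the remainder.

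Concretely, I first set up the distributional fixed-point equation for the escape probability, namely $Y \stackrel{d}{=} \beta S/(1+\beta S)$ with $S = \sum_{i=1}^{Z} Y_i$ and $Y_i$ i.i.d.\ copies of $Y$ independent of $Z$. An elementary contraction argument shows that $Y = 1 - \xi$ with $0 < \xi \leq 1/(\beta Z)$ almost surely, and a second iteration refines this to $\xi = 1/(\beta Z) + O(\beta^{-2})$ uniformly in $Z$. Substituting into A\"{i}d\'{e}kon's formula, the denominator simplifies to $1 + O(\beta^{-2})$ and the numerator becomes $1 - (2/\beta)\, E[1/Z] + O(\beta^{-2})$, yielding an expansion $v(\beta,P) = 1 - (2/\beta)\,E[1/Z] + R(\beta,P)/\beta^{2}$ with an explicit bound on the remainder $R$.

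To compare the two speeds, I invoke the coupling $(Z_1', Z_2')$ with $Z_2' \leq Z_1'$ supplied by the stochastic domination. The leading-order contribution to $v(\beta,P_1) - v(\beta,P_2)$ equals $(2/\beta)\, E[1/Z_2' - 1/Z_1']$, which is strictly positive since $P_1 \neq P_2$ forces $\mathbb{P}(Z_1' > Z_2') > 0$. It then remains only to ensure that the $O(\beta^{-2})$ remainders do not overwhelm this gain, and the two fractions inside the minimum in the definition of $\beta_1$ are precisely the two competing error sources one must dominate: the first records contributions from pairs of independent draws $(Z_1,Z_2)$ with $Z_1 < Z_2$, which arise because in A\"{i}d\'{e}kon's formula the siblings' subtrees are independent copies and a ``wrong'' local comparison is possible, while the second records the rescaling of the ratio by an effective factor involving $E[Z]$.

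The principal obstacle is obtaining uniform control on the remainder in the expansion of $Y$: because $Z$ is unbounded under $P_1$ and $P_2$ and no moment assumption enters the statement, the error bounds must depend on the distributions only through the quantities already appearing in $\beta_1$. The cleanest route seems to be to bound the scaled deviation $\beta Z(1-Y) - 1$ deterministically via the fixed-point equation, after which it can be fed through the A\"{i}d\'{e}kon ratio without requiring extra moment input. Producing the precise constant $c_\delta$ and the absolute lower floor $23/4+\delta$ in $\beta_0$ will require carrying the expansion one order further than the leading analysis above and tracking the resulting inequalities carefully; this part is bookkeeping-heavy but not conceptually deep.
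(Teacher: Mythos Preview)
Your approach is genuinely different from the paper's. The paper never touches A\"{i}d\'{e}kon's formula; instead it constructs an explicit simultaneous coupling of the walk on $GW(P_1)$, the walk on $GW(P_2)$, and a $\beta$-biased walk $Y$ on $\mathbb{Z}_{\geq 0}$, all driven by a common sequence of uniforms together with a carefully designed joint assignment of offspring numbers. Regeneration times of $Y$ are then automatically regeneration times for both tree walks (``super-regeneration''), so the two speeds share a denominator $\tilde E[\tau_1]$ and one only has to compare $\tilde E[|X^{(1)}_{\tau_1}|]$ with $\tilde E[|X^{(2)}_{\tau_1}|]$. The comparison is a case split on $|\mathcal B|$, the number of backsteps of $Y$ before $\tau_1$: the event $|\mathcal B|=1$ furnishes a strictly positive contribution of order $1/\beta$, while $|\mathcal B|\geq 2$ is controlled via the tail bound $P(|\mathcal B|=k)\leq c\,(27/(4(1+\beta)))^k$.

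Your expansion of A\"{i}d\'{e}kon's formula would plausibly prove the qualitative statement that $v(\beta,P_1)>v(\beta,P_2)$ for all sufficiently large $\beta$, and your leading-order identification $v(\beta,P)=1-(2/\beta)\,E[1/Z]+O(\beta^{-2})$ is correct. The gap is that the theorem asserts a \emph{specific} threshold $\beta_0=\max\{\beta_1,23/4+\delta\}$, and those constants are artifacts of the coupling proof, not intrinsic features of the speed. The number $23/4$ is exactly $27/4-1$, the value above which the geometric series $\sum_k (27/(4(1+\beta)))^k$ converges; the $27/4$ is itself a combinatorial by-product of the super-regeneration bound. There is no mechanism by which this number would fall out of an asymptotic expansion of the A\"{i}d\'{e}kon ratio. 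More seriously, the first expression inside the minimum defining $\beta_1$ involves \emph{independent} draws $Z_1\sim P_1$, $Z_2\sim P_2$ together with the indicator $1_{Z_1<Z_2}$. In the paper this arises because, once the two coupled walks fall out of sync (one at a fresh site and the other not), their degrees are assigned \emph{independently}, and a ``wrong-way'' comparison $Z_1<Z_2$ becomes possible. In A\"{i}d\'{e}kon's formula the expectations for $P_1$ and $P_2$ are computed entirely separately and then subtracted, so no joint law of an independent pair $(Z_1,Z_2)$ ever enters; your suggestion that it comes from ``siblings' subtrees being independent copies'' does not generate a cross-distribution term of this shape. Your closing claim that producing the precise $c_\delta$ and the floor $23/4+\delta$ is ``bookkeeping-heavy but not conceptually deep'' therefore understates the problem: to prove the theorem \emph{as stated} you would either have to import the coupling machinery you set out to avoid, or prove that whatever threshold your expansion yields is never larger than the paper's $\beta_0$, and you have outlined neither.
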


\begin{remark}
There is a universal cut-off $\beta_{1}=\beta_{1}(M)$ which works for all $P_{2}$ supported
on $\{1,2,\ldots,M\}$ since we have
\begin{equation}
E\left[Z_{2}'\left(\frac{1}{Z_{2}'}-\frac{1}{Z_{1}'}\right)\right]
\leq M\cdot E\left[\frac{1}{Z_{2}'}-\frac{1}{Z_{1}'}\right].
\end{equation}
The other expression inside the parentheses in the definition of $\beta_1$ in Theorem \ref{MainThm} is more useful when
``the distribution of $Z_{1}$ is much larger than that of $Z_{2}$''; we shall illustrate this in Corollary \ref{lowcutoff}.
\end{remark}

\begin{remark} Suppose $P_1$ dominates $P_2$ and are both supported on positive integers.
 Then $v(\beta,P_1)\geq v(\beta,P_2)$
follows trivially in the following cases.\\
(i) It is easy to see (via a coupling argument) that if the maximum of the support of $P_{2}$ is not larger
than the minimum of the support of $P_{1}$, then for any $\beta>0$, we have $v(\beta,P_{1})\geq v(\beta,P_{2})$.\\
(ii) We have $v(1,P_{1})\geq v(1,P_{2})$ just by considering the expression
$$v(1,P)=E_{P}\left[\frac{Z-1}{Z+1}\right]$$ obtained by \cite{LyonsI}.\\
(iii) Note that $v(1/E_{P_2}[Z],P_2)=0$,
$v(1/E_{P_2}[Z],P_1)>0$, and $v(\beta, P_j)$ is continuous in $\beta$ for $j=1,2$.
Thus, for some small $\epsilon >0$ we have $v(\beta,P_1)\geq v(\beta,P_2)$ for $0<\beta<\epsilon+1/E_{P_2}[Z]$.

Further (ii) and (iii) hold even when the offspring distributions are supported on non-negative
integers as long as we define the speed as in (\ref{defn of speed}) conditional on non-extinction
of the trees.
\end{remark}

We can improve the threshold $\beta_0$
of Theorem \ref{MainThm} by making stronger assumptions.


\begin{theorem}\label{KThm}
Suppose $P_{1}$ and $P_{2}$ are two probability measures on positive integers
such that for some $\ell>1$, there exists a coupling
of $Z^{(1)}_{1}, Z_{1}^{(2)},\cdots,Z_{1}^{(\ell)}$ and
$Z_{2}^{(1)}, Z_{2}^{(2)},\cdots, Z_{2}^{(\ell)}$ for which
$\min\{Z^{(1)}_{1}, Z_{1}^{(2)},\cdots, Z_{1}^{(\ell)}\}
\geq \max\{Z_{2}^{(1)}, Z_{2}^{(2)},\cdots, Z_{2}^{(\ell)}\}$ almost surely, where
$Z_{j}^{(1)},\ldots,Z_{j}^{(\ell)}$ are i.i.d. distributed according to $P_{j}$ for $j=1,\ 2$. Then
for any $\delta>0$, we have
$v(\beta,P_{1})\geq v(\beta,P_{2})$
for any $\beta>\max\{K\cdot\beta_{1}^{1/\ell}, \frac{23}{4}+\delta\}$ where the constant $K$
equals $\frac{27}{4}\cdot 3^{5/3}$.
\end{theorem}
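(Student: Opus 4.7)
The plan is to reduce Theorem \ref{KThm} to Theorem \ref{MainThm} by passing to coarsened, $\ell$-generations-at-a-time versions of the trees. Given a tree $T_j = GW(P_j)$, let $\widetilde{T}_j$ be its \emph{$\ell$-skeleton}, obtained by keeping only the vertices at depths in $\ell \mathbb{Z}_{\geq 0}$ and connecting each such vertex to its ancestor $\ell$ levels up. Then $\widetilde{T}_j$ is again a Galton--Watson tree, with offspring distribution $\widetilde{P}_j$ equal to the law of the size of generation $\ell$ in $GW(P_j)$.

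The first step is to use the block-coupling hypothesis of Theorem \ref{KThm} to produce a joint distribution of samples $\widetilde Z_1'$ from $\widetilde P_1$ and $\widetilde Z_2'$ from $\widetilde P_2$ with $\widetilde Z_1' \geq \widetilde Z_2'$ almost surely, and to estimate the two expressions appearing in \eqref{twolowerbounds} when applied to the pair $(\widetilde P_1, \widetilde P_2)$. Concretely, one synchronises the two branching processes generation by generation using the coupling of $\ell$-tuples, so that every individual in the $P_1$-process dominates every individual in the $P_2$-process in the same block of $\ell$ generations; careful bookkeeping should give $\widetilde{\beta}_1 \leq C(\ell) \cdot \beta_1$ where $C(\ell)$ grows so that $C(\ell)^{1/\ell}$ stays bounded, and the combinatorics should produce the explicit constant $K = \tfrac{27}{4}\cdot 3^{5/3}$.

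The second step is to relate the speeds of the original and the skeleton walks. The $\beta$-biased walk on $T_j$, sampled at successive visits to depths in $\ell\mathbb{Z}$, is a Markov chain on $\widetilde{T}_j$, and by reversibility its hitting and escape probabilities have a multiplicative structure in $\beta^\ell$ (travelling $\ell$ edges downward versus upward contributes a factor $\beta^\ell$). Either directly from A\"{\i}d\'ekon's formula or by re-running the coupling proof behind Theorem \ref{MainThm}, the comparison $v(\beta^\ell, \widetilde P_1) > v(\beta^\ell, \widetilde P_2)$ at the skeleton level will translate into $v(\beta, P_1) > v(\beta, P_2)$ at the original scale, the sojourn-time factor between the two being positive and (for large $\beta$) the same in $j=1,2$ up to negligible error. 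Applying Theorem \ref{MainThm} to $\widetilde P_1, \widetilde P_2$ with bias $\beta^\ell$ then yields monotonicity as soon as $\beta^\ell > \widetilde{\beta}_0$, i.e.\ $\beta > K \beta_1^{1/\ell}$, while the residual floor $\tfrac{23}{4}+\delta$ is inherited directly from Theorem \ref{MainThm}.

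The main obstacle will be making precise the equivalence of the two speed comparisons and controlling the quantitative loss in the coarsening: the induced chain on the $\ell$-skeleton is not literally a biased random walk, and turning the comparison into one requires care, particularly in verifying that $\widetilde{P}_j$ fits the hypotheses of Theorem \ref{MainThm} with the claimed quantitative bounds. I expect the cleanest route is to repeat the coupling construction used for Theorem \ref{MainThm}, but with single-generation offspring replaced throughout by $\ell$-tuples controlled by the block-coupling assumption, so that the constants propagate explicitly and produce both the exponent $1/\ell$ and the stated $K = \tfrac{27}{4}\cdot 3^{5/3}$.
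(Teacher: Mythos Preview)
Your primary route --- coarsen to the $\ell$-skeleton, apply Theorem \ref{MainThm} with bias $\beta^\ell$, then transfer back --- has a genuine gap at step two. The $\beta$-biased walk on $GW(P_j)$ observed at depths in $\ell\mathbb{Z}$ is \emph{not} the $\beta^\ell$-biased walk on $GW(\widetilde P_j)$: its transition probabilities from a skeleton vertex depend on the entire intermediate subtree (not just the number of depth-$\ell$ descendants), and its sojourn times between skeleton visits likewise depend on that structure. So an inequality $v(\beta^\ell,\widetilde P_1)>v(\beta^\ell,\widetilde P_2)$ carries no direct information about $v(\beta,P_1)$ versus $v(\beta,P_2)$; your claim that the sojourn factors are ``the same in $j=1,2$ up to negligible error'' is exactly the missing comparison, and it is not easier than the theorem itself. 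There is also a mismatch at step one: the hypothesis couples $\ell$ \emph{i.i.d.\ samples} from each $P_j$, not $\ell$ generations, so it gives no special leverage on the distributions $\widetilde P_j$ beyond ordinary stochastic domination, and in particular no reason for the specific constant $K$ to emerge.

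The paper's proof is precisely your fallback suggestion, but its mechanism is the one point you did not isolate: the $\ell$-tuple coupling is deployed \emph{per level of the tree}, not per block of generations. One takes i.i.d.\ copies of the coupled $\ell$-tuples indexed by level $k$, and when $X^{(i)}$ first visits the $j$-th distinct site at level $k$ (for $j\le \ell$) it is assigned $Z_{i,k+1}^{(j)}$ children; for $j>\ell$ one uses fresh independent samples. The crucial observation is that before the auxiliary walk $Y$ has made $\ell$ backsteps, neither $X^{(1)}$ nor $X^{(2)}$ can have visited more than $\ell$ distinct sites at any given level, so the min--max hypothesis forces every $X^{(1)}$-offspring count to dominate every $X^{(2)}$-offspring count at the same level, hence $|X^{(1)}_j|\ge |X^{(2)}_j|$ for all $j\le i_\ell$. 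This upgrades the observations (i)--(ii) in the proof of Theorem \ref{MainThm} to $|X_{\tau_1}^{(1)}|-|X_{\tau_1}^{(2)}|\ge 0$ when $|\mathcal B|\le \ell$ and $\ge -2(k-\ell)$ when $|\mathcal B|=k>\ell$. The negative sum in \eqref{finishing} then starts at $k=\ell+1$ instead of $k=2$, contributing an extra factor $\bigl(\tfrac{27}{4(1+\beta)}\bigr)^{\ell-1}$, which is what converts the threshold to $K\,\beta_1^{1/\ell}$ with the stated $K$.
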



\begin{corollary}\label{lowcutoff}
Assume that $P_{1}$ and $P_{2}$ are two probability measures on positive integers
such that $P_{1}$ stochastically dominates $P_{2}$. Let $m_{i}:=E_{P_i}[Z]$
and $Z_{i}^{(n)}$ be the number of children in the $n$th generation in $GW(P_{i})$, denote the law of $Z_{i}^{(n)}$
by $P_{i}^{(n)}$ for $i=1,\ 2$. Assume
that there exists some $\theta>0$ such that $E[e^{\theta Z_{1}^{(1)}}]<\infty$. Let $f$ be the generating function for $P_1$ and
$\alpha:=-\log f'(0)/\log f'(1)$. Further assume that $m_{1}>m_{2}^{\max\{\frac{2}{\alpha},\frac{1}{\alpha}+1\}}$
(if $P_1\{1\}=0$, then $\alpha=\infty$ and this condition is automatically satisfied).

Then, for any $\beta>23/4$, there exists some $k=k(P_{1},P_{2},\beta)$ such that $v(\beta,P_{1}^{(k)})>v(\beta,P_{2}^{(k)})$.
(We emphasize that $v(\beta,P_{i}^{(k)})$ is the speed of a $\beta$-biased random walk on a Galton-Watson tree
having $P_{i}^{(k)}$ as its offspring distribution.)
\end{corollary}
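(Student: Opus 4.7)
The plan is to apply Theorem~\ref{KThm} to the generation-$k$ offspring distributions $(P_1^{(k)}, P_2^{(k)})$ for a suitably large $k$ and an appropriate $\ell=\ell(k)>1$. The threshold that Theorem~\ref{KThm} then delivers, namely $\max\{K\beta_1(P_1^{(k)}, P_2^{(k)})^{1/\ell}, 23/4+\delta\}$, will be driven below any prescribed $\beta>23/4$ by letting $k$ and $\ell(k)$ grow together.

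Two standard tail estimates at generation $k$ drive the argument. First, the Seneta--Heyde-type lower-tail bound $P(Z_1^{(k)}\leq x)\leq C\,x^{\alpha}m_1^{-k\alpha}$ holds for supercritical GW trees with $P_1\{1\}>0$, with $\alpha$ exactly as in the statement (and the tail being even smaller when $P_1\{1\}=0$). Second, the exponential-moment hypothesis on $P_1$ transfers to $P_2$ via stochastic domination and yields $P(Z_2^{(k)}>t\,m_2^k)\leq e^{-ct}$ for $t\geq 1$. Choose a threshold $A_k=m_2^{ck}$ with $c$ slightly above $1$; the lower-tail bound becomes $P(Z_1^{(k)}<A_k)\leq C(m_2^c/m_1)^{k\alpha}$ and the upper-tail bound $P(Z_2^{(k)}>A_k)\leq\exp(-c_1 m_2^{(c-1)k})$. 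The hypothesis $m_1>m_2^{\max\{2/\alpha,\,1/\alpha+1\}}$ is exactly the room needed to choose $c$ so that both estimates decay at the rates required below; the two cases in the max correspond to the two distinct regimes in which the lower-tail contribution to $\beta_1$ is the binding constraint.

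These tail bounds feed both ingredients required by Theorem~\ref{KThm}. For the coupling, with $\ell=\ell(k)$ growing slowly (say, sub-geometrically), the union bounds $\ell\,P(Z_1^{(k)}<A_k)$ and $\ell\,P(Z_2^{(k)}>A_k)$ both vanish as $k\to\infty$, so on an event of probability tending to one, all $\ell$ i.i.d.\ copies of $Z_1^{(k)}$ exceed $A_k$ while all $\ell$ i.i.d.\ copies of $Z_2^{(k)}$ lie below $A_k$. For $\beta_1(P_1^{(k)}, P_2^{(k)})$ from \eqref{twolowerbounds}, the numerator $E[(1/Z_1-1/Z_2)\mathbf{1}_{Z_1<Z_2}]$ is bounded above by $P(Z_1<Z_2)\leq P(Z_1<A_k)+P(Z_2>A_k)$, while the denominator $E[1/Z_2'-1/Z_1']$, computed via the natural pruning coupling of the two GW trees, is at least of order $m_2^{-k}$. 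Consequently $\beta_1(P_1^{(k)}, P_2^{(k)})^{1/\ell(k)}$ can be made as small as desired along an appropriate sequence, pushing $K\beta_1^{1/\ell}$ below any $\beta>23/4+\delta$.

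The main obstacle is that Theorem~\ref{KThm} asks for the min/max domination to hold \emph{almost surely}, whereas the tail argument supplies it only with high probability; indeed the i.i.d.\ marginal constraint forces any such a.s.\ coupling to satisfy $\mathrm{ess\,sup}(P_2^{(k)})\leq \mathrm{ess\,inf}(P_1^{(k)})$, which typically fails. My plan to bridge this gap is to pass to truncated distributions $\widetilde{P}_1^{(k)}$ (supported on $[A_k,\infty)$) and $\widetilde{P}_2^{(k)}$ (supported on $[1,A_k]$) that do satisfy the support condition, apply Theorem~\ref{KThm} to the truncated pair, and then transfer the strict speed inequality back to $(P_1^{(k)}, P_2^{(k)})$ using continuity of $v(\beta,\cdot)$ together with the exponentially small total-variation distance between truncated and original distributions.
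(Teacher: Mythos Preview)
Your route differs from the paper's and the last paragraph contains a genuine gap. The paper does \emph{not} invoke Theorem~\ref{KThm}; it applies Theorem~\ref{MainThm} directly to the pair $(P_1^{(k)},P_2^{(k)})$ and shows that the first ratio inside the minimum in \eqref{twolowerbounds} tends to $0$ as $k\to\infty$. After multiplying numerator and denominator by $m_2^{k}$, the numerator is at most $E[m_2^{k}/Z_1^{(k)}]$, and the heart of the proof is establishing $E[m_2^{k}/Z_1^{(k)}]\to 0$. This is where the hypothesis $m_1>m_2^{\max\{2/\alpha,\,1/\alpha+1\}}$ enters: the lower tail of $Z_1^{(k)}$ is controlled by combining Bingham's bound $P(W_1\le\epsilon)\le D\epsilon^{\alpha}$ for the limit variable with Athreya's large-deviation estimate for $|Z_1^{(k)}/m_1^{k}-W_1|$, and the two exponents $1/\alpha+1$ and $2/\alpha$ come from these two pieces respectively. (Your one-line lower-tail bound $P(Z_1^{(k)}\le x)\le Cx^{\alpha}m_1^{-k\alpha}$ conflates these two steps.) The denominator is bounded below via Fatou and $Z_2^{\prime(k)}/Z_1^{\prime(k)}\to 0$ a.s. Once $\beta_1(P_1^{(k)},P_2^{(k)})\to 0$, Theorem~\ref{MainThm} yields the conclusion for $\beta>23/4+\delta$, and $\delta$ is taken small.

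The problem with your approach is the transfer step. You rightly observe that the a.s.\ hypothesis of Theorem~\ref{KThm} forces $\mathrm{ess\,sup}\,P_2^{(k)}\le\mathrm{ess\,inf}\,P_1^{(k)}$, which generally fails, and propose to pass to truncated distributions $\widetilde P_i^{(k)}$ and transfer the inequality back by continuity of $v(\beta,\cdot)$ in total variation. But no such continuity result is available in the paper, and it is not obvious; more seriously, even granting continuity, recovering a \emph{strict} inequality for the untruncated pair requires a quantitative lower bound on $v(\beta,\widetilde P_1^{(k)})-v(\beta,\widetilde P_2^{(k)})$ that beats the perturbation error, and you have not supplied one. (Indeed, once the supports are separated the coupling remark after Theorem~\ref{MainThm} already gives $v(\beta,\widetilde P_1^{(k)})\ge v(\beta,\widetilde P_2^{(k)})$ for all $\beta$, so the detour through Theorem~\ref{KThm} buys nothing---and that remark only yields a weak inequality, which makes the transfer problem sharper, not easier.) The paper's direct use of Theorem~\ref{MainThm} avoids all of this.
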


The following corollary is the counterpart for Theorem 1.2 in \cite{BenArous}.


\begin{corollary}\label{morekids}
Assume all the assumptions in Theorem \ref{MainThm}, moreover, assume that the minimum degrees of both $P_1$ and $P_2$ are bigger than $d$,
i.e.  $d_{i} := \min\{k \geq 1, P_{i}(Z = k) > 0\}\geq d$, for $i\in\{1,2\}$.

Then the result of Theorem \ref{MainThm} is true for a smaller $\beta$; that is  $v(\beta,P_{1})>v(\beta,P_{2})$
for any $\beta>\beta_{0}= \max\{\beta_{1}, \frac{23}{4d}+\delta\}$, where the constant  $c_{\delta}$
should be replaced with $\tilde{c}_{\delta}=\frac{c_{\delta}}{d}$
in the definition of $\beta_1$ in \eqref{twolowerbounds}.

\end{corollary}

Basically, the stronger assumption of graphs with degrees higher than $d$ enables us to strengthen our results
and $\beta$ can be reduced to $23/4d$ if $d$ is large enough.





\section{Constructing the Walks}\label{CouplingSection}

Let us describe precisely the coupling we use. Let $U_1$ have uniform distribution
on $(1/(\beta+1), 1)$. Let $(U_{i})_{i\geq 2}$ be
i.i.d. uniformly distributed random variables on $[0,1]$ independent of $U_1$. Let
$\{(Z_{1,k}',Z_{2,k}')\}_{k\geq 1}$ be i.i.d. random vectors
such that for each $k$, $Z_{1,k}'$ has the marginal distribution $P_{1}$ and $Z_{2,k}'$
has the marginal distribution $P_{2}$ and with probability $1$, we have $Z_{2,k}'\leq Z_{1,k}'$.
Finally let $\{Z_{i,k}\}_{k\geq 1}$ be i.i.d. $P_i$ for $i=1,2$. The sequences $\{U_i\}_{i\geq 1},\
\{Z_{1,k}\}_{k\geq 1},\ \{Z_{2,k}\}_{k\geq 1},\ \{(Z_{1,k}',Z_{2,k}')\}_{k\geq 1}$ are independent of each other.

In our proof we shall work conditional on an event which ensures that the roots are only visited once,
for this reason we only need one copy of $U_1$. Note that our definition of $U_1$ is slightly different
from the one in \cite{BenArous}.

We construct two random walks $X_n^{(1)}$ and $X_n^{(2)}$ (on $GW(P_1)$ and $GW(P_2)$)
and another walk $Y_n$ on $\mathbb{Z}_{\geq 0}$ in the following way.
Define $Y_{0}:=0$ and for $n\geq 1$,
\begin{equation}
Y_{n}:=\sum_{i=1}^{n}\left\{1_{U_{i}>\frac{1}{\beta+1}}-1_{U_{i}\leq\frac{1}{\beta+1}}\right\},\quad n\in\mathbb{N}.
\end{equation}

We start $X^{(1)}$ and $X^{(2)}$ at the roots and grow the trees $GW(P_1)$ and $GW(P_2)$ dynamically.
For simplicity we drop the time parameter $n$ and denote the position of $X_{n}^{(i)}$ by $x^{(i)}$.

Now, if at time $n\geq 0$,
$X_n^{(1)}$ and $X_n^{(2)}$ are at two sites $x^{(1)}$ and $x^{(2)}$, neither of them visited before by
the corresponding walks, then
we assign $Z_{1,n+1}'$ and $Z_{2,n+1}'$  many children to $x^{(1)}$ and $x^{(2)}$ respectively
(recall that $Z_{1,n+1}'\geq Z_{2,n+1}'$).

If at time $n$, one of the walks, say $X^{(1)}$ is at a site $x^{(1)}$ previously visited
by the walk while the other walk $X^{(2)}$  is at a new site $x^{(2)}$ then we assign
$Z_{2,n+1}$ many children to $x^{(2)}$.

Let us now explain the rules for transition.
Denote the number of offsprings of $x^{(i)}$ by $Z_{i}$ and
let $x^{(i)}_k$ be the $k$th child of $x^{(i)}$ ($i=1,\ 2$).

Define
\begin{align}
&\eta_{1}:=\frac{\beta}{(\beta+1)Z_{1}},\quad \eta_{2}:=\left(\frac{\beta}{\beta+1}\right)\left(\frac{1}{Z_2}-\frac{1}{Z_1}\right),
\quad \eta_{3}:=\left(\frac{1}{\beta+1}-\frac{1}{Z_{2}\beta+1}\right)\frac{1}{Z_{2}},
\\
&\eta_{4}:=\left(\frac{1}{\beta+1}-\frac{1}{Z_{1}\beta+1}\right)\frac{1}{Z_{1}},
\quad
\eta_{5}:=|\eta_{3}-\eta_{4}|.\nonumber
\end{align}

Then whenever $Z_1\geq Z_2$, we move according to the rule explained below.

When $U_{n+1}\in (1/(\beta+1),1)$ we have the following cases.
\begin{enumerate}
\item
Consider the random walk $X^{(1)}$.
\begin{itemize}
\item
If $U_{n+1}\in\big(\frac{1}{\beta+1}+(i-1)\eta_{1},\frac{1}{\beta+1}+i\eta_{1}\big]$,
then $X_{n+1}^{(1)}=x^{(1)}_{Z_1+1-i}$ for $i=1,2,\ldots,Z_{1}$.
\end{itemize}
\item
Consider the random walk $X^{(2)}$.
\begin{itemize}
\item
If $U_{n+1}\in\big(\frac{1}{\beta+1}+(i-1)\eta_{2},\frac{1}{\beta+1}+i\eta_{2}\big]$,
then we have $X_{n+1}^{(2)}=x^{(2)}_{Z_2+1-i}$, where $i=1,2,\ldots,Z_{2}$.
\item
If $U_{n+1}\in\big(\frac{1}{\beta+1}+Z_{2}\eta_{2}+(i-1)\eta_{1},
\frac{1}{\beta+1}+Z_{2}\eta_{2}+i\eta_{1}\big]$,
then we have $X_{n+1}^{(2)}=x^{(2)}_{Z_2+1-i}$, where $i=1,2,\ldots,Z_{2}$.
\end{itemize}
\end{enumerate}

\begin{figure}[htb]
\begin{center}
\includegraphics[scale=0.80]{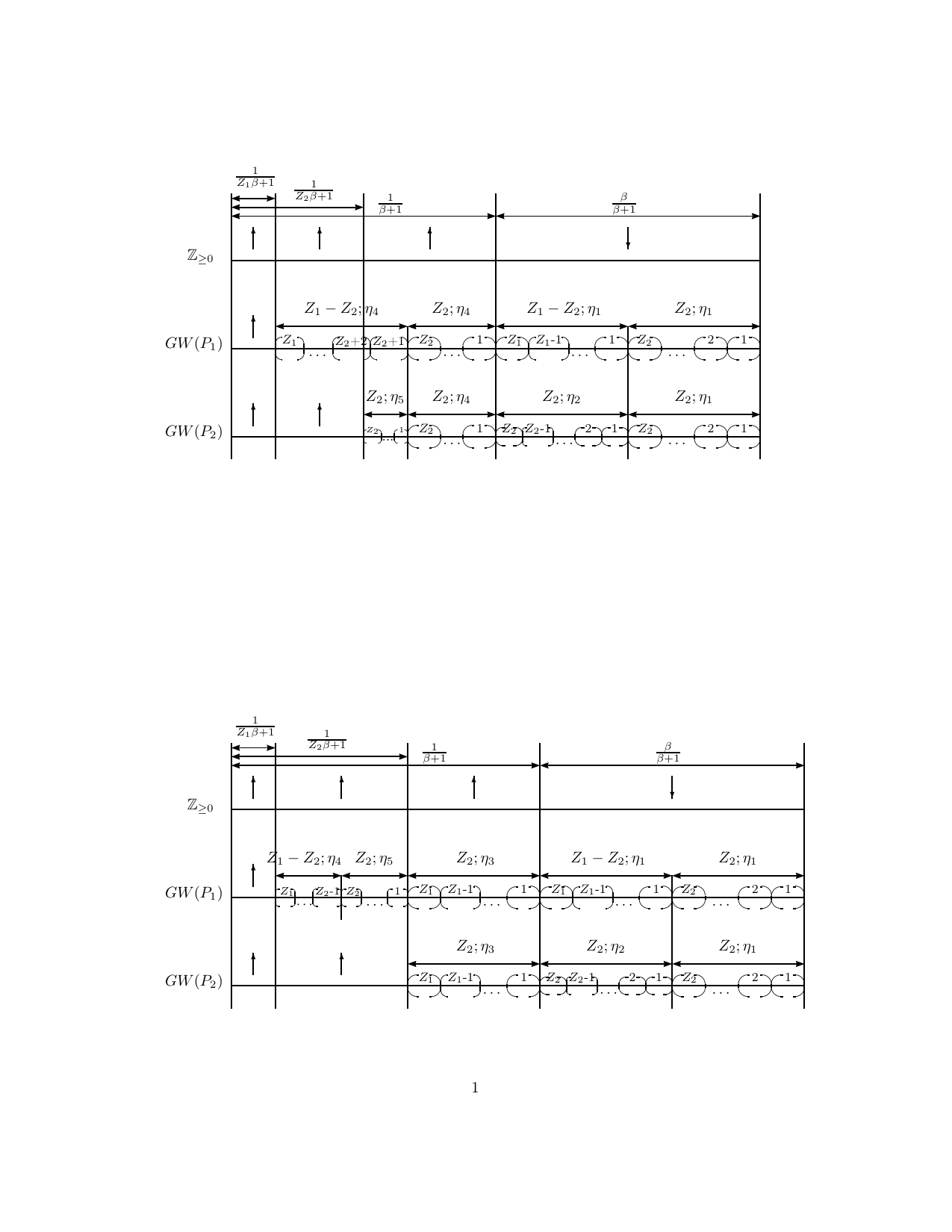}
\caption{The coupling for $\eta_{3}\geq\eta_{4}$. In the illustration, we use $Z_{1};\eta_{4}$ etc.
to denote $Z_{1}$ many subintervals with each subinterval of length $\eta_{4}$ etc.}
\label{PictureOne}
\end{center}
\end{figure}

When $U_{n+1}\in (0,1/(\beta+1))$ we have to consider two cases.
If $\eta_{3}\geq\eta_{4}$, then we use the following coupling. Figure \ref{PictureOne} gives an illustration.

\begin{enumerate}
\item
Consider the random walk $X^{(1)}$.
\begin{itemize}
\item
If $U_{n+1}\in\big[0,\frac{1}{Z_{1}\beta+1}\big]$, then we have $X_{n+1}^{(1)}=x^{(1)}_{\ast}$.
\item
If $U_{n+1}\in\big(\frac{1}{Z_{1}\beta+1}+(i-1)\eta_{4},\frac{1}{Z_{1}\beta+1}+i\eta_{4}\big]$,
then we have $X_{n+1}^{(1)}=x^{(1)}_{Z_1+1-i}$, where $i=1,2,\ldots,Z_{1}$.

\end{itemize}

\item
Consider the random walk $X^{(2)}$.
\begin{itemize}
\item
If $U_{n+1}\in\big[0,\frac{1}{Z_{2}\beta+1}\big]$, then we have $X_{n+1}^{(2)}=x^{(2)}_{\ast}$.
\item
If $U_{n+1}\in\big(\frac{1}{Z_{2}\beta+1}+(i-1)\eta_{5},\frac{1}{Z_{2}\beta+1}+i\eta_{5}\big]$,
then we have $X_{n+1}^{(2)}=x^{(2)}_{Z_2+1-i}$, where $i=1,2,\ldots,Z_{2}$.
\item
If $U_{n+1}\in\big(\frac{1}{Z_{2}\beta+1}+Z_{2}\eta_{5}+(i-1)\eta_{4},\frac{1}{Z_{2}\beta+1}+Z_{2}\eta_{5}+i\eta_{4}\big]$,
then we have $X_{n+1}^{(2)}=x^{(2)}_{Z_2+1-i}$, where $i=1,2,\ldots,Z_{2}$.
\end{itemize}
\end{enumerate}

\begin{figure}[htb]
\begin{center}
\includegraphics[scale=0.80]{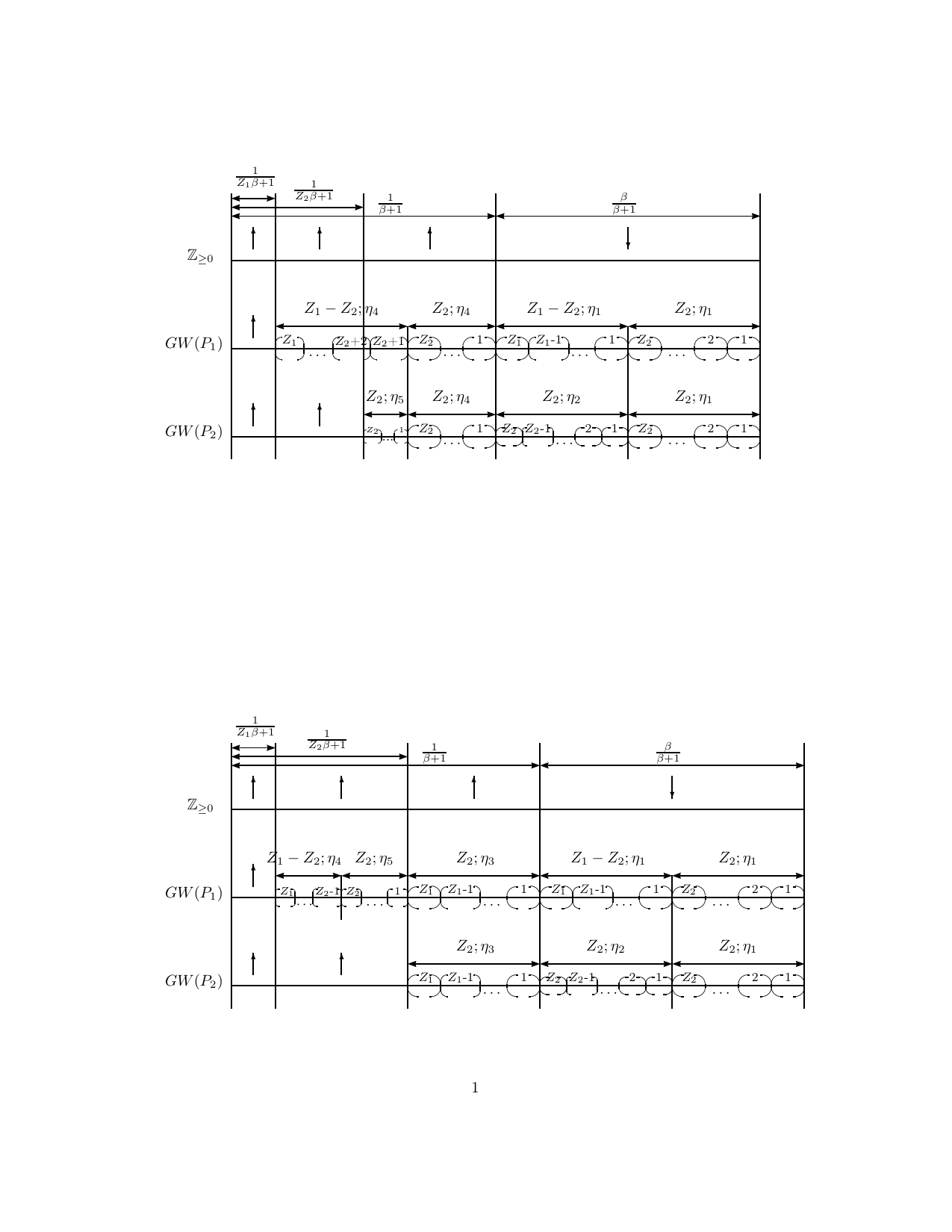}
\caption{The coupling for $\eta_{4}>\eta_{3}$}
\label{PictureTwo}
\end{center}
\end{figure}

If $\eta_{3}<\eta_{4}$, then we use the following coupling. Figure \ref{PictureTwo} is an illustration
of the following coupling.
\begin{enumerate}

\item
Consider the random walk $X^{(1)}$.
\begin{itemize}
\item
If $U_{n+1}\in\big[0,\frac{1}{Z_{1}\beta+1}\big]$, then we have $X_{n+1}^{(1)}=x^{(1)}_{\ast}$.
\item
If $U_{n+1}\in\big(\frac{1}{Z_{1}\beta+1}+(i-1)\eta_{4},\frac{1}{Z_{1}\beta+1}+i\eta_{4}\big]$,
then we have $X_{n+1}^{(1)}=x^{(1)}_{Z_1+1-i}$, where $i=1,2,\ldots,Z_{1}-Z_{2}$.
\item
If $U_{n+1}\in\big(\frac{1}{Z_{1}\beta+1}+(Z_{1}-Z_{2})\eta_{4}+(i-1)\eta_{5}
,\frac{1}{Z_{1}\beta+1}+(Z_{1}-Z_{2})\eta_{4}+i\eta_{5}\big]$,
then $X_{n+1}^{(1)}=x^{(1)}_{Z_2+1-i}$, where $i=1,2,\ldots,Z_{2}$.
\item
If $U_{n+1}\in\big(\frac{1}{Z_{2}\beta+1}+(i-1)\eta_{3},\frac{1}{Z_{2}\beta+1}+i\eta_{3}\big]$,
then we have $X_{n+1}^{(1)}=x^{(1)}_{Z_2+1-i}$, where $i=1,2,\ldots,Z_{2}$.
\end{itemize}

\item
Consider the random walk $X^{(2)}$.
\begin{itemize}
\item
If $U_{n+1}\in\big[0,\frac{1}{Z_{2}\beta+1}\big]$, then we have $X_{n+1}^{(2)}=x^{(2)}_{\ast}$.
\item
If $U_{n+1}\in\big(\frac{1}{Z_{2}\beta+1}+(i-1)\eta_{3},\frac{1}{Z_{2}\beta+1}+i\eta_{3}\big]$,
then we have $X_{n+1}^{(2)}=x^{(2)}_{Z_2+1-i}$, where $i=1,2,\ldots,Z_{2}$.
\end{itemize}
\end{enumerate}

Finally if $Z_1< Z_2$ we move according to the following rule.
\begin{enumerate}
\item
For $i=1,\ 2$
\begin{itemize}
\item
If $U_{n+1}\in\big[0,\frac{1}{Z_{i}\beta+1}\big]$, then we have $X_{n+1}^{(i)}=x^{(i)}_{\ast}$.
\item
If $U_{n+1}\in\big(\frac{1}{Z_{i}\beta+1}+(j-1)\frac{\beta}{Z_{i}\beta+1},\frac{1}{Z_{i}\beta+1}+j\frac{\beta}{Z_{i}\beta+1}]$,
then we have $X_{n+1}^{(i)}=x^{(i)}_{j}$, where $j=1,2,\ldots,Z_{i}$.
\end{itemize}
\end{enumerate}

It is routine to check that $X^{(i)}$ is a $\beta$-biased random walk on $GW(P_{i})$ for $i=1,\ 2$.

\section{Proofs}\label{ProofSection}

The main idea in our proof is to use a technique originally used in \cite{BenArous},
to couple the walks on the Galton-Watson trees with a random walk on $\mathbb{Z}$.
We will use a super-regeneration time which is a regeneration time for all the three walks $Y$, $GW(P_{1})$
and $GW(P_{2})$. Regeneration time is an often-used technique in the study of random walks in random media.
(See for example \cite{Zeitouni}.)
Informally, a regeneration time is a maximum of a random walk which is also a minimum of the future of the random walk.
A time $\tau$ is a regeneration time for the $\beta$-biased random walk $(Y_{n})_{n\geq 0}$ on $\mathbb{Z}$
if we have
\begin{equation}
Y_{\tau}>\max_{n<\tau}Y_{n}
\quad\text{and}\quad
Y_{\tau}<\min_{n>\tau}Y_{n}.
\end{equation}
Consider the regeneration time for walks on $GW(P_{1})$ and $GW(P_{2})$ in the sense that is usually defined
on trees (see \cite{LyonsII}). As in \cite{BenArous}, if $\tau$ is a regeneration time for $(Y_{n})_{n\geq 0}$, then it is also
a regeneration time for $GW(P_{1})$ and $GW(P_{2})$. In this respect, $\tau$ is called a super-regeneration time.

Let us consider the event that $0$ is a regeneration time for $(Y_{n})_{n\geq 0}$.
Following the notations in \cite{BenArous}, we denote this event by $\{0-SR\}$. Then, we have
\begin{equation}
p_{\infty}:=P(0-SR)=\frac{\beta-1}{\beta+1}.
\end{equation}
Let us define the probability measure $\tilde{P}$ as
\begin{equation}
\tilde{P}(\cdot):=P(\cdot|0-SR).
\end{equation}
Under $\tilde{P}$, $0$ is the first regeneration time and let $\tau_{i}$ be $i$th non-zero regeneration time.

Then,
$(|X_{\tau_{i+1}}-X_{\tau_{i}}|,\tau_{i+1}-\tau_{i})_{i\geq 1}$ is a sequence of i.i.d. random vectors
having the same distribution as $(|X_{\tau_{1}}|,\tau_{1})$ under $\tilde{P}$ and
as in \cite{BenArous},
we have, for any $\beta>1$,
\begin{equation}
v(\beta,P_{1})=\frac{\tilde{E}[|X_{\tau_{1}}^{(1)}|]}{\tilde{E}[\tau_{1}]}
\quad
\text{and}
\quad
v(\beta,P_{2})=\frac{\tilde{E}[|X_{\tau_{1}}^{(2)}|]}{\tilde{E}[\tau_{1}]}.
\end{equation}
Hence, $v(\beta,P_{1})>v(\beta,P_{2})$ is equivalent to $\tilde{E}[|X_{\tau_{1}}^{(1)}|]>\tilde{E}[|X_{\tau_{1}}^{(2)}|]$.

Follwing the notation in \cite{BenArous} let us denote by $\mathcal{B}$ the set of times
before $\tau_{1}$ when the random walk on $\mathbb{Z}_{\geq 0}$ takes
a step back, i.e. $\mathcal{B}=\{j\leq \tau_{1}|\ U_j\leq 1/(\beta+1)\}$.

We quote the following lemma from \cite{BenArous}.

\begin{lemma}[Lemma 4.1. \cite{BenArous}]\label{NandK}
If $\{|\mathcal{B}|=k\}$, then $\{\tau_{1}\leq 3k+2\}$.
\end{lemma}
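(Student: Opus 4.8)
The statement to prove is Lemma~\ref{NandK}: if $|\mathcal{B}|=k$, then $\tau_1\leq 3k+2$. Here $\mathcal{B}$ is the set of times $j\leq\tau_1$ at which the coordinate walk $Y$ takes a backward step (i.e.\ $U_j\leq 1/(\beta+1)$), and $\tau_1$ is the first non-zero regeneration time of $Y$ under $\tilde{P}$.

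\medskip

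The plan is to exploit the very definition of a regeneration time together with the fact that $Y$ is a nearest-neighbour walk on $\mathbb{Z}_{\geq 0}$ driven by the $U_i$'s. First I would record the elementary bookkeeping identity: in the first $\tau_1$ steps the walk makes exactly $|\mathcal{B}|=k$ backward steps and $\tau_1-k$ forward steps, so $Y_{\tau_1}=(\tau_1-k)-k=\tau_1-2k$. Next I would argue that at a regeneration time the walk has never been above its current level and will never come back below it; combined with the fact that $Y$ started at $0$ and moves by $\pm1$, this forces $Y_{\tau_1}\leq$ (number of distinct levels visited strictly before $\tau_1$) in a controlled way. More precisely, between consecutive "record" upward steps the walk can only dawdle by making a down-step immediately followed by the up-steps needed to recover; each such excursion below a record level consumes at least one element of $\mathcal{B}$. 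So the number of steps that are \emph{not} part of the monotone climb is at most $2k$ (each back-step is paired with exactly one compensating forward step to return to the previous maximum), and the monotone climb itself consists of $Y_{\tau_1}+1$ or so up-steps. Plugging $Y_{\tau_1}=\tau_1-2k$ into $\tau_1\leq (Y_{\tau_1}+\text{const}) + 2k$ and solving gives $\tau_1\leq 3k+\text{const}$; a careful count of the endpoints (the walk must also take at least the first step, and the regeneration condition involves a strict maximum) yields the additive constant $2$.

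\medskip

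Concretely, I would set it up as follows. Let $M_j=\max_{i\leq j}Y_i$ be the running maximum and note that $Y_{\tau_1}=M_{\tau_1}$ and $M_{\tau_1}>M_{\tau_1-1}$ by the regeneration property, while $Y_n>Y_{\tau_1}$ for all $n>\tau_1$, again by the regeneration property — the latter is not directly needed for the bound on $\tau_1$ but pins down that $\tau_1$ is well defined. For each value $h\in\{1,\dots,M_{\tau_1}\}$ let $\sigma_h$ be the first time $Y$ reaches $h$; these are $M_{\tau_1}=\tau_1-2k$ distinct times, each arrived at by a forward step, and $\sigma_{M_{\tau_1}}\le\tau_1$. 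Every one of the remaining $\tau_1-M_{\tau_1}=2k$ time-steps up to $\tau_1$ is either a backward step (there are $k$ of them) or a forward step that does not set a new record; I would show the non-record forward steps number at most $k$ by pairing each of them injectively with a preceding backward step from the same level (a non-record up-step from level $h-1$ to $h$ must have been preceded, since the last visit to $h$, by a down-step off level $h$). This gives $\tau_1-M_{\tau_1}\le 2k$, i.e.\ $\tau_1\le M_{\tau_1}+2k=(\tau_1-2k)+2k$, which is vacuous as stated — so the real content is the complementary inequality $M_{\tau_1}\le k+1$ (the walk cannot have climbed to a height exceeding $k+1$ once it has taken only $k$ down-steps \emph{and} $0$ is a regeneration time, because otherwise there would be an earlier regeneration, contradicting minimality of $\tau_1$). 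Substituting $M_{\tau_1}=\tau_1-2k\le k+1$ rearranges to exactly $\tau_1\le 3k+1\le 3k+2$, and tracking the strict-versus-weak inequalities in the regeneration definition absorbs the final $+1$ slack.

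\medskip

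I expect the main obstacle to be the last inequality $M_{\tau_1}\le k+1$: it is where the \emph{minimality} of $\tau_1$ (first \emph{non-zero} regeneration time) does the work, and it requires arguing that if the walk ever climbed to height $h$ with strictly fewer than $h-1$ back-steps used so far then some intermediate level was itself a regeneration time for $Y$, contradicting that $\tau_1$ is the first one — equivalently, that between $0$ and $\tau_1$ every ascending record level except possibly the last is "spoiled" by a later down-step, and each spoiling down-step is a distinct element of $\mathcal{B}$. Since this is quoted verbatim as Lemma~4.1 of \cite{BenArous}, in the paper itself one simply cites it; the above is the argument one would reconstruct if pressed, and the bookkeeping with the additive constant $2$ is the only genuinely fiddly part.
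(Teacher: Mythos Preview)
The paper does not prove this lemma at all; it is quoted from \cite{BenArous} and used as a black box, so there is no argument in the present paper to compare your sketch against.

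Your attempted proof has a genuine gap at exactly the step you yourself flag as the obstacle. You correctly derive $Y_{\tau_1}=\tau_1-2k$ and correctly reduce the statement to a bound $M_{\tau_1}\le k+O(1)$. The problem is the claimed injection ``each spoiling down-step is a distinct element of $\mathcal{B}$'': it is simply false. Already for the path $Y=(0,1,2,1,2,3,\dots)$ one has $k=1$ and $M_{\tau_1}=3$, and the single down-step at time $3$ spoils both records $\sigma_1$ and $\sigma_2$ (since $Y_3=1\le 1$ and $Y_3=1\le 2$); no injection from $\{1,\dots,M_{\tau_1}-1\}$ into $\mathcal{B}$ exists, and your inequality $M_{\tau_1}\le k+1$ fails ($3\not\le 2$). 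Your alternative phrasing --- ``fewer than $h-1$ back-steps before $\sigma_h$ forces an earlier regeneration'' --- fails on the same example: at $\sigma_3=5$ only one back-step has occurred, yet neither $\sigma_1$ nor $\sigma_2$ is a regeneration. In fact, under the strict future condition $Y_\tau<\min_{n>\tau}Y_n$ written in this paper, the ``double-climb'' path $0,1,2,1,2,3,4,3,4,5,\dots$ has $\tau_1=9$ with $k=2$, so $\tau_1>3k+2$; iterating gives $\tau_1=4k+1$. Thus with the conventions stated here the constant $3k+2$ cannot be reached by \emph{any} argument; presumably \cite{BenArous} uses a slightly different regeneration convention or a different constant (only a linear bound $\tau_1=O(k)$ is needed for the applications in Section~\ref{ProofSection}). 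Either way, the injection you propose does not establish the linear bound, and a correct proof needs a different combinatorial device.
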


\begin{proof}[Proof of Theorem \ref{MainThm}]

Consider $|\mathcal{B}|=k$, i.e. $\mathcal{B}=\{i_{1}<\cdots<i_{k}\}$, where $k\geq 1$ and $\tau_{1}=n$.
Let us make two simple observations.

(i) $|X_{\tau_{1}}^{(1)}|-|X_{\tau_{1}}^{(2)}|=2$ or $0$ when $k=1$.

(ii) $|X_{\tau_{1}}^{(1)}|-|X_{\tau_{1}}^{(2)}|\geq -2(k-1)$ when $k\geq 2$.

We have
\begin{align}
&\tilde{E}\left[|X_{\tau_{1}}^{(1)}|-|X_{\tau_{1}}^{(2)}|\right]
\\
&=\tilde{E}\left[|X_{\tau_{1}}^{(1)}|-|X_{\tau_{1}}^{(2)}|;|\mathcal{B}|=1\right]
+\sum\nolimits_{\ast}\tilde{E}\left[|X_{\tau_{1}}^{(1)}|-|X_{\tau_{1}}^{(2)}|;
\mathcal{B}=\{i_{1}<\cdots<i_{k}\},\tau_{1}=n\right]\nonumber
\\
&\geq\tilde{E}\left[|X_{\tau_{1}}^{(1)}|-|X_{\tau_{1}}^{(2)}|;|\mathcal{B}|=1\right]\nonumber
\\
&\ -\sum\nolimits_{\ast} 2(k-1)\tilde{P}\left(|X_{\tau_{1}}^{(1)}|-|X_{\tau_{1}}^{(2)}|<0;
\mathcal{B}=\{i_{1}<\cdots<i_{k}\},\tau_{1}=n\right)\nonumber
\end{align}
where $\sum_{\ast}$ stands for summation over all $n\geq 2,\ k\geq 2$ and $\{i_{1},\ldots,i_{k}\}\subseteq\{1,\ldots,n\}$
for which the walk $Y_k$ does not come back to the origin.

For the first term, we have
\begin{equation}\label{StepOne}
\tilde{E}\left[|X_{\tau_{1}}^{(1)}|-|X_{\tau_{1}}^{(2)}|;|\mathcal{B}|=1\right]
\geq 2\left(\frac{\beta}{\beta+1}\right)^{3}E\left[\frac{1}{Z_{2}\beta+1}-\frac{1}{Z_{1}\beta+1}\right].
\end{equation}
Note the small difference between \eqref{StepOne} and Lemma 5.1. in \cite{BenArous}, which is due
to the difference in the definition of $U_1$ as mentioned earlier.
Let us explain the inequality in \eqref{StepOne}. Let $\epsilon_i=\mathbb{I}(U_i\geq 1/(\beta+1))-\mathbb{I}(U_i<1/(\beta+1))$.
When $|\mathcal{B}|=1$, $|X_{\tau_{1}}^{(1)}|-|X_{\tau_{1}}^{(2)}|=2$ or $0$, hence we have
$$
\tilde{E}\left[|X_{\tau_{1}}^{(1)}|-|X_{\tau_{1}}^{(2)}|;|\mathcal{B}|=1\right]
=\frac{2}{p_{\infty}}P\left(|X_{\tau_{1}}^{(1)}|-|X_{\tau_{1}}^{(2)}|=2;|\mathcal{B}|=1;0-SR\right)$$
and thus we get the lower bound in \eqref{StepOne} by considering the event
$$\mathcal{A}=\{\epsilon_1=\epsilon_2=1,\ \epsilon_3=-1 \text{ and } |X^{(1)}_3|-|X^{(2)}_3|=2,\ \epsilon_4=\epsilon_5=1, \tau_1=5\}.$$

For the second term, we have
\begin{align}
&\tilde{P}\left(|X_{\tau_{1}}^{(1)}|-|X_{\tau_{1}}^{(2)}|<0;
\mathcal{B}=\{i_{1}<\cdots<i_{k}\},\tau_{1}=n\right)
\\
&\leq\frac{1}{p_{\infty}}P\left(|X_{\tau_{1}}^{(1)}|-|X_{\tau_{1}}^{(2)}|<0;
\mathcal{B}=\{i_{1}<\cdots<i_{k}\},\tau_{1}=n\right).\nonumber
\end{align}
On $\{|X_{\tau_{1}}^{(1)}|-|X_{\tau_{1}}^{(2)}|<0\}$, let $\sigma$ be the first time
when the walk on $GW(P_{1})$ goes up but the walk on $GW(P_{2})$ goes down, necessarily $\sigma\in\mathcal{B}$. We introduce some
notation here, given a sequence $\theta=\{\theta_n\}_{n\geq 1}$ where $\theta_n=\pm 1$ we denote by
$\tau(\theta)$ the first regeneration time for the walk $Z_n=\sum_{i=1}^n \theta_i$, e.g.
$\tau_1=\tau(\epsilon)$ where $\epsilon=\{\epsilon_n=\mathbb{I}(U_n\geq 1/(\beta+1))-\mathbb{I}(U_n<1/(\beta+1)\}_{n\geq 1}$.
Define
$$\tau_1^{(j)}=\tau(\epsilon^{(j)})\text{ where }\epsilon^{(j)}=\{\epsilon_1,\hdots,\epsilon_{j-1},-1,\epsilon_{j+1},\hdots\}.$$
We can define $\mathcal{B}^{(j)}$ similarly. Also define
$$\tau_{1_{(j)}}=\tau(\epsilon_{(j)})\text{ where }\epsilon_{(j)}=\{\epsilon_1,\hdots,\epsilon_{j-1},+1,\epsilon_{j+1},\hdots\}.$$
Define $\mathcal{B}_{(j)}$ in an analogous manner. Also note that
if $|X_{\tau_{1}}^{(1)}|-|X_{\tau_{1}}^{(2)}|<0$ then the event
$$\mathcal{E}=\bigcup_{i,j\leq \tau_1}\{Z_{1,i}<Z_{2,j}'\}
              \bigcup_{i,j\leq \tau_1}\{Z_{1,i}'<Z_{2,j}\}
              \bigcup_{i,j\leq \tau_1}\{Z_{1,i}<Z_{2,j}\}
              \bigcup_{\substack{i\neq j\\ i,j\leq \tau_1}}\{Z_{1,i}'<Z_{2,j}'\}$$
is true. Let $\mathcal{E}_{i_l}:=\bigcup_{j=1}^4 \mathcal{E}_{j,i_l}$ where
$$\mathcal{E}_{1,i_l}:=\bigcup_{i,j\leq \tau_1}\left[\{Z_{1,i}<Z_{2,j}'\}
\bigcap\left\{U_{i_l}\in\left(\frac{1}{Z_{2,j}'\beta+1},\frac{1}{Z_{1,i}\beta+1}\right)\right\}\right]$$
and the other three events are defined similarly.
\begin{align}
&P\left(|X_{\tau_{1}}^{(1)}|-|X_{\tau_{1}}^{(2)}|<0;
\mathcal{B}=\{i_{1}<\cdots<i_{k}\},\tau_{1}=n\right)\nonumber
\\
&\leq\sum_{\ell=1}^{k}P\left(
\mathcal{B}=\{i_{1}<\cdots<i_{k}\},\tau_{1}=n,\sigma=i_{\ell}\right)\nonumber
\\
&=\sum_{\ell=1}^{k}P\left(\mathcal{B}^{(i_{\ell})}=\{i_{1}<\cdots<i_{k}\},\tau_{1}^{(i_{\ell})}=n,\sigma=i_{\ell}\right)\nonumber
\\
&\leq\sum_{\ell=1}^{k}P\bigg(\left\{\mathcal{B}^{(i_{\ell})}=\{i_{1}<\cdots<i_{k}\},\tau_{1}^{(i_{\ell})}=n\right\};
\mathcal{E}_{i_l}
\bigg)\nonumber
\\
&\leq\sum_{\ell=1}^{k}4n^{2}P\left(\mathcal{B}^{(i_{\ell})}=\{i_{1}<\cdots<i_{k}\},\tau_{1}^{(i_{\ell})}=n\right)
E\left[\frac{1}{Z_{1}\beta+1}-\frac{1}{Z_{2}\beta+1};1_{Z_{1}<Z_{2}}\right],\nonumber
\end{align}
where we used independence of $\epsilon^{(i_{\ell})}$ and $U_{i_{\ell}}$. Then, by Lemma \ref{NandK},
\begin{align}
&P\left(|X_{\tau_{1}}^{(1)}|-|X_{\tau_{1}}^{(2)}|<0;
\mathcal{B}=\{i_{1}<\cdots<i_{k}\},\tau_{1}=n\right)\nonumber
\\
&\leq4(3k+2)^{2}\sum_{\ell=1}^{k}P\left(\mathcal{B}^{(i_{\ell})}=\{i_{1}<\cdots<i_{k}\},
\tau_{1}^{(i_{\ell})}=n,U_{i_{\ell}}\leq\frac{1}{\beta+1}\right)\nonumber
\\
&\hskip70pt\cdot(\beta+1)\cdot
E\left[\frac{1}{Z_{1}\beta+1}-\frac{1}{Z_{2}\beta+1};1_{Z_{1}<Z_{2}}\right]\nonumber
\\
&=4(\beta+1)(3k+2)^{2}E\left[\frac{1}{Z_{1}\beta+1}-\frac{1}{Z_{2}\beta+1};1_{Z_{1}<Z_{2}}\right]
\sum_{\ell=1}^{k}P\left(\mathcal{B}=\{i_{1}<\cdots<i_{k}\},\tau_{1}=n\right)\nonumber
\\
&\leq 8\beta k(3k+2)^{2}E\left[\frac{(Z_{2}-Z_{1})\beta}{(Z_{1}\beta+1)(Z_{2}\beta+1)}1_{Z_{1}<Z_{2}}\right]
P\left(\mathcal{B}=\{i_{1}<\cdots<i_{k}\},\tau_{1}=n\right)\nonumber
\\
&\leq 8k(3k+2)^{2}E\left[\left(\frac{1}{Z_{1}}-\frac{1}{Z_{2}}\right)1_{Z_{1}<Z_{2}}\right]
P\left(\mathcal{B}=\{i_{1}<\cdots<i_{k}\},\tau_{1}=n\right).\nonumber
\end{align}
Therefore, by using the simple upper bound $P(|\mathcal{B}|=k)\leq c\left(\frac{27}{4(1+\beta)}\right)^{k}$
(Lemma 6.1. in \cite{BenArous}) for a universal constant $c$
and the fact that $p_{\infty}=(\beta-1)/(\beta+1)$,
we get
\begin{align}\label{finishing}
&\tilde{E}\left[|X_{\tau_{1}}^{(1)}|-|X_{\tau_{1}}^{(2)}|\right]
\\
&\geq 2\left(\frac{\beta}{\beta+1}\right)^{3}E\left[\frac{1}{Z_{2}\beta+1}-\frac{1}{Z_{1}\beta+1}\right]
\nonumber
\\
&\qquad-\sum_{k=2}^{\infty}\frac{16}{p_{\infty}}k(k-1)(3k+2)^{2}E\left[\left(\frac{1}{Z_{1}}-\frac{1}{Z_{2}}\right)1_{Z_{1}<Z_{2}}\right]
P(|\mathcal{B}|=k)\nonumber
\\
&\geq 2\left(\frac{\beta}{\beta+1}\right)^{3}E\left[\frac{(Z_{1}'-Z_{2}')\beta}{(Z_{2}'\beta+1)(Z_{1}'\beta+1)}\right]\nonumber
\\
&\qquad-\frac{c}{p_{\infty}}E\left[\left(\frac{1}{Z_{1}}-\frac{1}{Z_{2}}\right)1_{Z_{1}<Z_{2}}\right]\sum_{k=2}^{\infty}
16k(k-1)(3k+2)^{2}\left(\frac{27}{4(1+\beta)}\right)^{k}
\nonumber
\\
&\geq 2\left(\frac{\beta}{\beta+1}\right)^{3}E\left[\frac{(Z_{1}'-Z_{2}')\beta}{4Z_{2}'Z_{1}'\beta^{2}}\right]\nonumber
\\
&\qquad-\frac{c(\beta+1)}{(\beta-1)}E\left[\left(\frac{1}{Z_{1}}-\frac{1}{Z_{2}}\right)1_{Z_{1}<Z_{2}}\right]\sum_{k=2}^{\infty}
16k(k-1)(3k+2)^{2}\left(\frac{27}{4(1+\beta)}\right)^{k}\nonumber
\\
&\geq 2\left(\frac{\beta}{\beta+1}\right)^{3}E\left[\frac{(Z_{1}'-Z_{2}')}{4Z_{2}'Z_{1}'\beta}\right]
-\frac{c\cdot 27^2}{4^2(\beta-1)(\beta+1)}
E\left[\left(\frac{1}{Z_{1}}-\frac{1}{Z_{2}}\right)1_{Z_{1}<Z_{2}}\right]\nonumber
\\
&\hskip140pt\cdot\sum_{k=2}^{\infty}
16k(k-1)(3k+2)^{2}\left(\frac{27}{4(1+\beta)}\right)^{k-2},\nonumber
\end{align}
where we used the fact that $Z_{1}'\geq Z_{2}'\geq 1$ and $\beta>1$.
Hence we conclude that for any $\delta>0$, $\tilde{E}\left[|X_{\tau_{1}}^{(1)}|-|X_{\tau_{1}}^{(2)}|\right]>0$
if we have
\begin{equation}\label{maxoftwo}
\beta>\max\left\{c_{\delta}\cdot\frac{E\left[\left(\frac{1}{Z_{1}}-\frac{1}{Z_{2}}\right)1_{Z_{1}<Z_{2}}\right]}
{E\left[\frac{1}{Z_{2}'}-\frac{1}{Z_{1}'}\right]},\frac{23}{4}+\delta\right\},
\end{equation}
for some universal constant $c_{\delta}>0$ that only depends on $\delta>0$.

Now we derive the other lower bound in \eqref{twolowerbounds}. On $\{|X_{\tau_{1}}^{(1)}|-|X_{\tau_{1}}^{(2)}|<0\}$, let us define
the events $E$ and $F$ as
\begin{equation}
E:=\left\{\text{For some $\sigma_{1}\leq\tau_{1}$, $|X_{\sigma_{1}+1}^{(1)}|\neq|X_{\sigma_{1}+1}^{(2)}|$
and $X_{j}^{(1)}=X_{j}^{(2)}$ for any $j\leq\sigma_{1}$}\right\}.
\end{equation}
\begin{align}
F:=\big\{&\text{For some $\sigma_{2}\leq\tau_{1}$, $X_{j}^{(1)}=X_{j}^{(2)}$ for any $j\leq\sigma_{2}$},
\\
&\text{and $X_{\sigma_{2}+1}^{(1)}\neq X_{\sigma_{2}+1}^{(2)}$, but $|X_{\sigma_{2}+1}^{(1)}|=|X_{\sigma_{2}+1}^{(2)}|$}\big\}.\nonumber
\end{align}
In other words, $E$ is the event that the first time the walks on $GW(P_{1})$ and $GW(P_{2})$ decouple,
the walk on $GW(P_{2})$ goes up and the walk on $GW(P_{1})$ goes down. Clearly this happens at time $\sigma_{1}\in\mathcal{B}$.
$F$ is the event that the first time the walks on $GW(P_{1})$ and $GW(P_{2})$ decouple, they both go downwards but to different
offsprings. This happens at time $\sigma_{2}$ which may or may not be in $\mathcal{B}$.

Next,
\begin{align}\label{TwoTerms}
&P(|X_{\tau_{1}}^{(1)}|-|X_{\tau_{1}}^{(2)}|<0;\mathcal{B}=\{i_{1},\ldots,i_{k}\},\tau_{1}=n)
\\
&=P(|X_{\tau_{1}}^{(1)}|-|X_{\tau_{1}}^{(2)}|<0;\mathcal{B}=\{i_{1},\ldots,i_{k}\},\tau_{1}=n;E)\nonumber
\\
&\qquad\qquad\qquad+P(|X_{\tau_{1}}^{(1)}|-|X_{\tau_{1}}^{(2)}|<0;\mathcal{B}=\{i_{1},\ldots,i_{k}\},\tau_{1}=n;F).\nonumber
\end{align}
Let us get an upper bound for the second term in \eqref{TwoTerms}.
\begin{align}\label{SecondTerm}
&P(|X_{\tau_{1}}^{(1)}|-|X_{\tau_{1}}^{(2)}|<0;\mathcal{B}=\{i_{1},\ldots,i_{k}\},\tau_{1}=n;F)
\\
&=\sum_{\ell=1}^{n}P(|X_{\tau_{1}}^{(1)}|-|X_{\tau_{1}}^{(2)}|<0;\mathcal{B}=\{i_{1},\ldots,i_{k}\},\tau_{1}=n,\sigma_{2}=\ell;F)\nonumber
\\
&\leq\sum_{\ell=1}^{n}P(\mathcal{B}=\{i_{1},\ldots,i_{k}\},\tau_{1}=n,\sigma_{2}=\ell;F).\nonumber
\end{align}
If $\ell\notin\{i_{1},\ldots,i_{k}\}$, then we get
\begin{align}\label{ETerm}
& P(\mathcal{B}=\{i_{1},\ldots,i_{k}\},\tau_{1}=n,\sigma_{2}=\ell;F)
\\
&\leq P\left(\mathcal{B}=\{i_{1},\ldots,i_{k}\},\tau_{1}=n;
U_{\ell}\in\bigcup_{m=1}^{n}\left(\frac{1}{\beta+1},\frac{(Z_{1,m}'-Z_{2,m}')\beta}{(\beta+1)Z_{1,m}'}+\frac{1}{\beta+1}\right)\right)\nonumber
\\
&= P\left(\mathcal{B}_{(\ell)}=\{i_{1},\ldots,i_{k}\},\tau_{1(\ell)}=n,
U_{\ell}\in\bigcup_{m=1}^{n}\left(\frac{1}{\beta+1},\frac{(Z_{1,m}'-Z_{2,m}')\beta}{(\beta+1)Z_{1,m}'}+\frac{1}{\beta+1}\right)\right)\nonumber
\\
&= P\left(\mathcal{B}_{(\ell)}=\{i_{1},\ldots,i_{k}\},\tau_{1(\ell)}=n\right)
P\left(U_{\ell}\in\bigcup_{m=1}^{n}\left(\frac{1}{\beta+1},\frac{(Z_{1,m}'-Z_{2,m}')\beta}{(\beta+1)Z_{1,m}'}+\frac{1}{\beta+1}\right)\right)\nonumber
\\
&\leq n\frac{(\beta+1)}{\beta}P\left(\mathcal{B}_{(\ell)}=\{i_{1},\ldots,i_{k}\},\tau_{1(\ell)}=n,U_{\ell}\geq\frac{1}{\beta+1}\right)
\cdot E\left[1-\frac{Z_{2}'}{Z_{1}'}\right]\nonumber
\\
&\leq 2n P\left(\mathcal{B}=\{i_{1},\ldots,i_{k}\},\tau_{1}=n\right)\cdot E\left[1-\frac{Z_{2}'}{Z_{1}'}\right].\nonumber
\end{align}
If $\ell\in\{i_{1},\ldots,i_{k}\}$, let us define
\begin{equation}
G_{m}:=Z_{2,m}'\left[\left(\frac{1}{\beta+1}
-\frac{1}{Z_{2,m}'\beta+1}\right)\frac{1}{Z_{2,m}'}-\left(\frac{1}{\beta+1}
-\frac{1}{Z_{1,m}'\beta+1}\right)\frac{1}{Z_{1,m}'}\right]_{+}.
\end{equation}
Then, we get
\begin{align}
& P(\mathcal{B}=\{i_{1},\ldots,i_{k}\},\tau_{1}=n,\sigma_{2}=\ell;F)\nonumber
\\
&\leq P\left(\mathcal{B}=\{i_{1},\ldots,i_{k}\},\tau_{1}=n;
U_{\ell}\in\bigcup_{m=1}^{n}\left(\frac{1}{\beta Z_{2,m}'+1},\frac{1}{\beta Z_{2,m}'+1}
+G_{m}\right)\right)\nonumber
\\
&\leq P\left(\mathcal{B}^{(\ell)}=\{i_{1},\ldots,i_{k}\},\tau_1^{(\ell)}=n\right)
\cdot n\cdot E[G_{m}]\nonumber
\\
&=(\beta+1)P\left(\mathcal{B}^{(\ell)}=\{i_{1},\ldots,i_{k}\},\tau_1^{(\ell)}=n,U_{\ell}\leq\frac{1}{\beta+1}\right)
\cdot n\cdot E[G_{m}]\nonumber
\\
&=(\beta+1)P\left(\mathcal{B}=\{i_{1},\ldots,i_{k}\},\tau_{1}=n\right)\cdot n\cdot E[G_{m}].\nonumber
\end{align}
For a coupled $(Z_{1}',Z_{2}')$, and after a little bit of computations, we have,
\begin{align}
&\left(\frac{1}{\beta+1}-\frac{1}{Z_{2}'\beta+1}\right)-Z_{2}'\left(\frac{\beta}{Z_{1}'\beta+1}-\frac{\beta}{(\beta+1)Z_{1}'}\right)\nonumber
\\
&=\left(\frac{1}{\beta+1}\right)\left[1-\frac{(Z_{1}'-1)\beta}{Z_{1}'\beta+1}-\frac{\beta+1}{Z_{2}'\beta+1}
+\left(\frac{(Z_{1}'-1)\beta}{Z_{1}'\beta+1}\right)\left(1-\frac{Z_{2}'}{Z_{1}'}\right)\right].\nonumber
\end{align}
It is easy to check that
\begin{equation}
1-\frac{(Z_{1}'-1)\beta}{Z_{1}'\beta+1}-\frac{\beta+1}{Z_{2}'\beta+1}
=\frac{(Z_{2}'-Z_{1}')\beta+(Z_{2}'-Z_{1}')\beta^{2}}{(Z_{1}'\beta+1)(Z_{2}'\beta+1)}\leq 0,\nonumber
\end{equation}
and
\begin{equation}
0\leq\frac{(Z_{1}'-1)\beta}{Z_{1}'\beta+1}\left(1-\frac{Z_{2}'}{Z_{1}'}\right)\leq 1-\frac{Z_{2}'}{Z_{1}'}.\nonumber
\end{equation}
Hence $E[G_{m}]\leq\left(\frac{1}{\beta+1}\right)E\left[\left(1-\frac{Z_{2}'}{Z_{1}'}\right)\right]$ and therefore
\begin{align}\label{GTerm}
&(\beta+1)P\left(\mathcal{B}=\{i_{1},\ldots,i_{k}\},\tau_{1}=n\right)\cdot n\cdot E[G_{m}]
\\
&\leq(\beta+1)P\left(\mathcal{B}=\{i_{1},\ldots,i_{k}\},\tau_{1}=n\right)\cdot n\cdot\frac{1}{\beta+1}E\left[1-\frac{Z_{2}'}{Z_{1}'}\right]\nonumber
\\
&=nP\left(\mathcal{B}=\{i_{1},\ldots,i_{k}\},\tau_{1}=n\right)E\left[1-\frac{Z_{2}'}{Z_{1}'}\right].\nonumber
\end{align}
So plugging \eqref{ETerm} and \eqref{GTerm} back into \eqref{SecondTerm}, we get
\begin{align}
&P\left(|X_{\tau_{1}}^{(1)}|-|X_{\tau_{1}}^{(2)}|<0;\mathcal{B}=\{i_{1},\ldots,i_{k}\},\tau_{1}=n;F\right)
\\
&\leq 2n^{2}P\left(\mathcal{B}=\{i_{1},\ldots,i_{k}\},\tau_{1}=n\right)E\left[1-\frac{Z_{2}'}{Z_{1}'}\right]\nonumber
\\
&\leq 2(3k+2)^{2}P\left(\mathcal{B}=\{i_{1},\ldots,i_{k}\},\tau_{1}=n\right)E\left[1-\frac{Z_{2}'}{Z_{1}'}\right].\nonumber
\end{align}
This takes care of the second term in \eqref{TwoTerms}.
Finally, let us give an upper bound for the first term in \eqref{TwoTerms}.
We omit some of the steps since they are similar. In the following
computations, remember that $\sigma_{1}\in\mathcal{B}$.
\begin{align}
&P\left(|X_{\tau_{1}}^{(1)}|-|X_{\tau_{1}}^{(2)}|<0;\mathcal{B}=\{i_{1},\ldots,i_{k}\},\tau_{1}=n;E\right)
\\
&\leq\sum_{m=1}^{k}P\left(\mathcal{B}=\{i_{1},\ldots,i_{k}\},\tau_{1}=n;\sigma_{1}=i_{m};E\right)\nonumber
\\
&\leq knP\left(\mathcal{B}=\{i_{1},\ldots,i_{k}\},\tau_{1}=n\right)
\cdot(\beta+1)\cdot E\left[\frac{1}{Z_{2}'\beta+1}-\frac{1}{Z_{1}'\beta+1}\right]\nonumber
\\
&\leq k(3k+2)P\left(\mathcal{B}=\{i_{1},\ldots,i_{k}\},\tau_{1}=n\right)\cdot(\beta+1)\cdot
E\left[\frac{\beta(Z_{1}'-Z_{2}')}{Z_{1}'Z_{2}'\beta^{2}}\right]\nonumber
\\
&\leq 2k(3k+2)P\left(\mathcal{B}=\{i_{1},\ldots,i_{k}\},\tau_{1}=n\right)\cdot E\left[\frac{1}{Z_{2}'}-\frac{1}{Z_{1}'}\right].\nonumber
\end{align}
Similar to our arguments in \eqref{finishing}, we get
\begin{align}\label{finishingI}
&\tilde{E}\left[|X_{\tau_{1}}^{(1)}|-|X_{\tau_{1}}^{(2)}|\right]
\\
&\geq 2\left(\frac{\beta}{\beta+1}\right)^{3}E\left[\frac{1}{Z_{2}'\beta+1}-\frac{1}{Z_{1}'\beta+1}\right]
\nonumber
\\
&\qquad\qquad
-\frac{1}{p_{\infty}}\sum_{k=2}^{\infty}2(3k+2)^{2}P(|\mathcal{B}|=k)E\left[1-\frac{Z_{2}'}{Z_{1}'}\right]\nonumber
\\
&\qquad\qquad\qquad\qquad
-\frac{1}{p_{\infty}}\sum_{k=2}^{\infty}2k(3k+2)P(|\mathcal{B}|=k)\cdot E\left[\frac{1}{Z_{2}'}-\frac{1}{Z_{1}'}\right]\nonumber
\end{align}
\begin{align*}
&\geq
\left(\frac{1}{2}\right)\left(\frac{\beta}{\beta+1}\right)^{3}
E\left[\frac{1}{Z_{2}'}-\frac{1}{Z_{1}'}\right]
\\
&\qquad\qquad
-\frac{c}{p_{\infty}}E\left[1-\frac{Z_{2}'}{Z_{1}'}\right]\sum_{k=2}^{\infty}2(3k+2)^{2}\left(\frac{27}{4(1+\beta)}\right)^{k}
\\
&\qquad\qquad\qquad\qquad
-\frac{c}{p_{\infty}}E\left[\frac{1}{Z_{2}'}-\frac{1}{Z_{1}'}\right]\sum_{k=2}^{\infty}2k(3k+2)\left(\frac{27}{4(1+\beta)}\right)^{k}.
\end{align*}
As earlier, we conclude that for any $\delta>0$ there is a universal constant $c_{\delta}'$ such that
\begin{equation}
\tilde{E}\left[|X_{\tau_{1}}^{(1)}|-|X_{\tau_{1}}^{(2)}|\right]>0,
\end{equation}
whenever
\begin{equation}
\beta>\max\left\{c_{\delta}'\left(
\frac{E\left[Z_{2}'\left(\frac{1}{Z_{2}'}-\frac{1}{Z_{1}'}\right)\right]}
{E\left[\frac{1}{Z_{2}}-\frac{1}{Z_{1}}\right]}+1\right),\frac{23}{4}+\delta\right\}.
\end{equation}
\end{proof}

\begin{proof}[Proof of Corollary \ref{lowcutoff}]
We shall write $Z_i$ for $Z_i^{(1)}$, for $i=1,\ 2$ and $p_j$ for $P_1\{j\}$.
Let us first prove that $E\left[m_{2}^{k}/Z_{1}^{(k)}\right]\rightarrow 0$ as $k\rightarrow\infty$.
Pick up some $m_{3}$ satisfies $m_{2}<m_{3}<m_{1}$. Then, we have
\begin{align}
m_{2}^{k}E\left[\frac{1}{Z_{1}^{(k)}}\right]
&=m_{2}^{k}\sum_{n\leq m_{3}^{k}}\frac{1}{n}P(Z_{1}^{(k)}=n)
+m_{2}^{k}\sum_{n>m_{3}^{k}}\frac{1}{n}P(Z_{1}^{(k)}=n)\nonumber
\\
&\leq m_{2}^{k}P(Z_{1}^{(k)}\leq m_{3}^{k})+\frac{m_{2}^{k}}{m_{3}^{k}}.\nonumber
\end{align}
Therefore, it is sufficient to prove that $m_{2}^{k}P(Z_{1}^{(k)}\leq m_{3}^{k})\rightarrow 0$
as $k\rightarrow\infty$.

If $W_i$ denotes the almost sure limit of the martingale
$Z_{i}^{(k)}/m_{i}^{k}$, then under the assumption $E[Z_{i}\log^+ Z_{i}]<\infty$, $W_i$ is a
positive random variable for $i=1,\ 2$ (see e.g. \cite{Kesten} and \cite{LyonsIII}).
Several other properties of $W_i$ have been well studied in the literature.
Recall that $f$ is the generating function of $Z_{1}$, then $0<\alpha=-\log f'(0)/\log f'(1)$.
Let us first consider the case $p_{1}>0$.
Note that $\alpha<\infty$ when $p_{1}>0$.
From \cite{Bingham} and the references therein, if $p_{1}>0$, then, there exists a positive constant $D$
such that $P(W_1\leq\epsilon)\leq D\epsilon^{\alpha}$ as $\epsilon\downarrow 0$.

Moreover, \cite{Athreya} proved that
if there exists some $\theta>0$ such that $E[e^{\theta Z_{1}}]<\infty$ and $p_{j}\neq 1$ for
any $j\geq 1$, then there exist some constants $C_{1},C_{2}$
such that
\begin{equation}
P\left(\left|\frac{Z_{1}^{(k)}}{m_{1}^{k}}-W_1\right|\geq\epsilon\right)\leq C_{1}e^{-C_{2}\epsilon^{\frac{2}{3}}m_{1}^{\frac{k}{3}}}.
\end{equation}
Now, splitting $P(Z_{1}^{(k)}\leq m_{3}^{k})$ into two terms, we get
\begin{align}
P(Z_{1}^{(k)}\leq m_{3}^{k})
&=P\left(Z_{1}^{(k)}\leq m_{3}^{k},\left|\frac{Z_{1}^{(k)}}{m_{1}^{k}}-W_1\right|>\epsilon^{(k)}\right)
\\
&\qquad\qquad\qquad\qquad\qquad
+P\left(Z_{1}^{(k)}\leq m_{3}^{k},\left|\frac{Z_{1}^{(k)}}{m_{1}^{k}}-W_1\right|\leq\epsilon^{(k)}\right)\nonumber
\\
&\leq P\left(\left|\frac{Z_{1}^{(k)}}{m_{1}^{k}}-W_1\right|>\epsilon^{(k)}\right)
+P\left(W_1\leq\epsilon^{(k)}+\frac{m_{3}^{k}}{m_{1}^{k}}\right).\nonumber
\end{align}
Let us choose $\epsilon^{(k)}=m_{2}^{-\frac{k}{\alpha}-k\delta}$ for some $\delta>0$.

Using the results stated before from \cite{Bingham},
\begin{align}
m_{2}^{k}P\left(W_1\leq\epsilon^{(k)}+\frac{m_{3}^{k}}{m_{1}^{k}}\right)
&\leq Dm_{2}^{k}\left(\epsilon^{(k)}+\frac{m_{3}^{k}}{m_{1}^{k}}\right)^{\alpha}
\\
&=D\left(m_{2}^{-k\delta}+\left(\frac{m_{2}^{\frac{1}{\alpha}}m_{3}}{m_{1}}\right)^{k}\right)^{\alpha}\rightarrow 0,\nonumber
\end{align}
as $k\rightarrow\infty$ if we have $m_{1}>m_{2}^{\frac{1}{\alpha}}m_{3}$. Since
it is valid for any $m_{2}<m_{3}<m_{1}$, the condition $m_{1}>m_{2}^{\frac{1}{\alpha}+1}$ is enough.

Using the results stated before from \cite{Athreya},
\begin{equation}
m_{2}^{k}P\left(\left|\frac{Z_{1}^{(k)}}{m_{1}^{k}}-W_1\right|>\epsilon^{(k)}\right)
\leq m_{2}^{k}C_{1}e^{-C_{2}(\epsilon^{(k)})^{\frac{2}{3}}m_{1}^{\frac{k}{3}}}
=m_{2}^{k}C_{1}e^{-C_{2}m_{2}^{-\frac{2k}{3\alpha}-\frac{2}{3}k\delta}m_{1}^{\frac{k}{3}}}\rightarrow 0,
\end{equation}
as $k\rightarrow\infty$ if $m_{1}>m_{2}^{\frac{2}{\alpha}+2\delta}$. Since we can pick up any $\delta>0$,
the condition $m_{1}>m_{2}^{\frac{2}{\alpha}}$ is enough.
This proves that $E\left[m_{2}^{k}/Z_{1}^{(k)}\right]\rightarrow 0$ as $k\rightarrow\infty$.

If $p_{1}=0$, then $\kappa:=\min\{k>0:p_{k}>0\}\geq 2$ and from \cite{Bingham}, we have
$\log P(W_{1}\leq\epsilon)\leq -C\epsilon^{-\beta/(1-\beta)}$, for some positive constant $C$ and $\beta:=\log\kappa/\log m_{1}$.
In other words, $P(W_{1}\leq\epsilon)$ is exponentially small.
Since $m_{1}>m_{2}$, we can pick up some $\alpha'$ large enough such that $m_{1}>m_{2}^{\max\{\frac{2}{\alpha'},\frac{1}{\alpha'}+1\}}$
holds. Since $P(W_{1}\leq\epsilon)$ is exponentially small, we can find a positive constant $D'$
such that $P(W_{1}\leq\epsilon)\leq D'\epsilon^{\alpha'}$.
Repeat the arguments as in the case $p_{1}>0$ replacing $\alpha$ by $\alpha'$ and $D$ by $D'$.
This proves that $E\left[m_{2}^{k}/Z_{1}^{(k)}\right]\rightarrow 0$ as $k\rightarrow\infty$.

Now, let us go back to the proof of the corollary. From \eqref{twolowerbounds}, it suffices to show that
\begin{equation}\label{gotozero}
\frac{E\left[\left(\frac{1}{Z_{1}^{(k)}}-\frac{1}{Z_{2}^{(k)}}\right)1_{Z_{1}^{(k)}<Z_{2}^{(k)}}\right]}
{E\left[\frac{1}{Z_{2}^{'(k)}}-\frac{1}{Z_{1}^{'(k)}}\right]}
\rightarrow 0,\quad\text{as $k\rightarrow\infty$}.
\end{equation}
Since, $E\exp(\theta Z_1)<\infty$ (in particular $E[Z_{i}\log^+ Z_{1}]<\infty$),
we have $\lim Z_{i}^{'(k)}/m_{i}^{k}>0$ a.s.
and hence
\begin{equation}
\frac{Z_{2}^{'(k)}}{Z_{1}^{'(k)}}
=\frac{m_{2}^{k}}{m_{1}^{k}}\cdot\frac{Z_{2}^{'(k)}/m_{2}^{k}}{Z_{1}^{'(k)}/m_{1}^{k}}
\rightarrow 0,\nonumber
\end{equation}
as $k\rightarrow\infty$, which implies that
\begin{equation}
\liminf_{k\rightarrow\infty}
E \left[m_2^k\left(\frac{1}{Z_{2}^{'(k)}}-\frac{1}{Z_{1}^{'(k)}}\right)\right]
=\liminf_{k\rightarrow\infty}
E\left[\frac{m_{2}^{k}}{Z_{2}^{'(k)}}\left(1-\frac{Z_{2}^{'(k)}}{Z_{1}^{'(k)}}\right)\right]
\geq E\left[\frac{1}{W_2}\right]>0.\nonumber
\end{equation}
Finally, notice that
\begin{equation}
m_{2}^{k}E\left[\left(\frac{1}{Z_{1}^{(k)}}-\frac{1}{Z_{2}^{(k)}}\right)1_{Z_{1}^{(k)}<Z_{2}^{(k)}}\right]
=E\left[\frac{m_{2}^{k}}{Z_{1}^{(k)}}\left(1-\frac{Z_{1}^{(k)}}{Z_{2}^{(k)}}\right)1_{Z_{1}^{(k)}<Z_{2}^{(k)}}\right]
\leq E\left[\frac{m_{2}^{k}}{Z_{1}^{(k)}}\right].\nonumber
\end{equation}
Therefore, we proved \eqref{gotozero}.
Given any $\beta>23/4$ we can choose $\delta>0$ such that $23/4+\delta<\beta$ and then choose
$k=k(P_{1},P_{2})$ large enough so that the maximum in \eqref{maxoftwo} equals
$23/4+\delta$.
\end{proof}

Finally, let us sketch a proof of Theorem \ref{KThm}.

\begin{proof}[Proof of Theorem \ref{KThm}]
We begin with the independent sequences $\{U_i\}_{i\geq 1}$,
$\{Z_{1,k}\}_{k\geq 1}$, $\{Z_{2,k}\}_{k\geq 1}$ and $\{(\widetilde{Z'}_{1,k},\widetilde{Z'}_{2,k})\}_{k\geq 1}$
where the first three have the same meaning as in Section \ref{CouplingSection}
and  $\{(\widetilde{Z'}_{1,k},\widetilde{Z'}_{2,k})\}_{k\geq 1}$  are i.i.d. copies
of $\left((Z_{1}^{(1)},\hdots,Z_{1}^{(\ell)}),(Z_{2}^{(1)},\hdots,Z_{2}^{(\ell)})\right)$, the latter
having the same meaning as in the
statement of Theorem \ref{KThm}. We shall write
$\widetilde{Z'}_{i,k}=(Z_{i,k}^{(1)},\hdots,Z_{i,k}^{(\ell)})$ for $i=1,\ 2$.

We start both walks at the roots and when $X^{(i)}$ visits the $j$th
distinct site at level $k$ for the first time, we assign
$Z_{i,k+1}^{(j)}$ many children to that site for $i=1,\ 2$ and $j\leq l$. If
one of the walks, say $X^{(1)}$ is visiting the $j$th distinct
site at level $k$ for the first time where $j>\ell$, then we assign
$Z_{1,i}$ many children to that site for some $i$ for which $Z_{1,i}$
has not been used before. At time $n$, we make the transition
using the two rules explained in Section \ref{CouplingSection}
according as the number of children of $X_n^{(1)}$ is larger
or smaller than the number of children of $X_n^{(2)}$.

If $|\mathcal{B}|=k$, we have

(i) $0\leq |X_{\tau_{1}}^{(1)}|-|X_{\tau_{1}}^{(2)}|\leq 2\ell$ when $k\leq\ell$.

(ii) $|X_{\tau_{1}}^{(1)}|-|X_{\tau_{1}}^{(2)}|\geq -2(k-\ell)$ when $k\geq \ell+1$.

This can be argued as follows. Assume that $\mathcal{B}=\{i_1,\hdots,i_k\}$
where $k\geq \ell$. If $|X_j^{(1)}|<|X_j^{(2)}|$ for some $j\leq i_{\ell}$, define
$j_{\ast}:=\min\{i : |X_i^{(1)}|<|X_i^{(2)}|\}$. Then $|X_{j_{\ast}-1}^{(1)}|=|X_{j_{\ast}-1}^{(2)}|$.
Since $j_{\ast}-1<i_{\ell}$, none of the walks has visited any of the levels more than $\ell$ times up till
time $j_{\ast}-1$.
We also have $\min\{Z_{1,k}^{(1)},\hdots,Z_{1,k}^{(\ell)}\}\geq\max\{Z_{2,k}^{(1)},\hdots,Z_{2,k}^{(\ell)}\}$
and hence the number of offsprings of $X_{j_{\ast}-1}^{(1)}$ is not smaller than
the number of offsprings of $X_{j_{\ast}-1}^{(2)}$. But then $|X_{j_{\ast}}^{(1)}|\geq |X_{j_{\ast}}^{(2)}|$, a contradiction.
Hence $|X_j^{(1)}|\geq |X_j^{(2)}|$ whenever $j\leq i_{\ell}$, this implies
the claims in (i) and (ii) stated above. A similar argument
can be given for the case $|\mathcal{B}|<\ell$.

So if we carry out an analysis similar to the one given in the Proof of Theorem \ref{MainThm},
then instead of (\ref{finishing}), we shall get
\begin{align}
&\tilde{E}\left[|X_{\tau_{1}}^{(1)}|-|X_{\tau_{1}}^{(2)}|\right]
\\
&\geq 2\left(\frac{\beta}{\beta+1}\right)^{3}E\left[\frac{(Z_{1}'-Z_{2}')}{4Z_{2}'Z_{1}'\beta}\right]
-\frac{c\cdot 27^{\ell+1}\ell^2}{4^{\ell+1}(\beta-1)(\beta+1)^{\ell}}
E\left[\left(\frac{1}{Z_{1}}-\frac{1}{Z_{2}}\right)1_{Z_{1}<Z_{2}}\right]\nonumber
\\
&\hskip130pt\cdot\sum_{k=\ell+1}^{\infty}
16k(k-\ell)(3k+2)^{2}\left(\frac{27}{4(1+\beta)}\right)^{k-\ell-1},\nonumber
\end{align}
and (\ref{finishingI}) can be modified similarly.
\end{proof}

\begin{proof}[Proof of Corollay \ref{morekids}]

The proof is  an extension and almost the same as the proof of Theorem \ref{MainThm}.
One needs to couple the two random walks on $GW(P_1)$ and $GW(P_2)$,
with a $d\beta$-random walk on $\mathbb{Z}_{\geq 0}$.
Formally we re-define the walk $Y_n$ as $Y_{0}:=0$ and for $n\geq 1$,
\begin{equation}
Y_{n}:=\sum_{i=1}^{n}\left\{1_{U_{i}>\frac{1}{d\beta+1}}-1_{U_{i}\leq\frac{1}{d\beta+1}}\right\},\quad n\in\mathbb{N}.\nonumber
\end{equation}

The walk on  $GW(P_1)$ and $GW(P_2)$ should also be changed accordingly.
Since this is similar to our previous argument, we omit it.
\end{proof}


\section*{Acknowledgements}

The three authors wish to thank G\'erard Ben Arous for suggesting this problem and the useful discussions
that they had with him. The authors also wish to thank Vladas Sidoravicius for helpful discussions.
Finally, the authors thank Alexander Fribergh for reading the first draft of the paper and Elie A\"{i}d\'{e}kon
for private communications.
The three authors are all supported by MacCracken Fellowship at New York University.
In addition, Sanchayan Sen is supported by NSF grant DMS-1007524, and Lingjiong Zhu is supported by NSF grant DMS-0904701 and DARPA grant.

\bibliographystyle{elsarticle-harv}
\bibliography{mybib}

\end{document}